%
%

\documentclass[11pt]{article}
\usepackage{amsmath,amsfonts,amssymb,latexsym,epsfig}
\usepackage{color,epsf}
\usepackage{graphicx,stix}

\usepackage{subfig}
%

\usepackage{alltt}
\usepackage{url}
\usepackage{a4}

\newcommand{\N}{\mathbb{N}}

\newtheorem{lemma}{Lemma}

\newtheorem{proposition}{Proposition}

\newtheorem{remark}{Remark}
\newtheorem{definition}{Definition}

\newenvironment{proof}{\paragraph{Proof:}}{$\Box$}

\newcommand{\e}{\ensuremath{\mathrm{e}}}

\newcommand{\psis}{{\mathcal S}}

\makeindex
\begin{document}
\title{On symmetric-conjugate composition methods in the numerical integration of differential 
equations
}

\author{S. Blanes\thanks{Instituto de Matem\'atica Multidisciplinar, Universitat Polit\`ecnica de Val\`encia, 46022-Valencia, Spain.
Email: \texttt{serblaza@imm.upv.es}}
 \and
 F. Casas\thanks{Departament de Matem\`atiques and IMAC, Universitat Jaume I, E-12071 Castell\'on, Spain. Email: \texttt{Fernando.Casas@uji.es} }
 \and
 P. Chartier\thanks{Universit\'e de Rennes, INRIA, CNRS, IRMAR, F-35000 Rennes, France. Email: \texttt{Philippe.Chartier@inria.fr}} 
\and 
A. Escorihuela-Tom\`as\thanks{Departament de Matem\`atiques and IMAC, 
Universitat Jaume I, E-12071 Castell\'on, Spain. Email: \texttt{alescori@uji.es}}
 }


%
\maketitle

\begin{abstract}

We analyze composition methods with complex coefficients exhibiting the so-called ``symmetry-conjugate'' pattern in their distribution. In particular,
we study
their behavior with respect to preservation of qualitative properties when projected on the real axis and we compare them with the
usual left-right palindromic compositions. New schemes within this family up to order 8 are proposed and their efficiency is tested on several
examples. Our analysis shows that higher-order schemes are more efficient even when time step sizes are relatively large.

\vspace*{1cm}


\end{abstract}\bigskip

\noindent AMS numbers: 65L05, 65P10, 37M15

\noindent Keywords: Composition methods, complex coefficients, time-symmetry, symplectic integrators, complex coefficients, initial value
problems

\section{Introduction}
\label{sec.1}

We are concerned in this work with compositions of  a time-symmetric 2nd-order integrator, denoted by $\mathcal{S}_h^{[2]}$. 
To be more specific, given the initial value problem
\begin{equation}  \label{ivp}
  x' = f(x), \qquad x(t_0) = x_0 \in \mathbb{R}^d
\end{equation}
with solution $x(t) = \varphi_t(x_0)$, method $\mathcal{S}_h^{[2]}$ verifies that $\mathcal{S}_h^{[2]}(x_0) = \varphi_h(x_0) + \mathcal{O}(h^3)$  for a time step $h$ and moreover $\mathcal{S}_h^{[2]} \circ \mathcal{S}_{-h}^{[2]} = \mathrm{id}$,
the identity map, for any $h$.  Then, the $s$-stage composition methods we are considering here are of the form
\begin{equation}  \label{compo.1}
  \psi_h^{[r]} = \mathcal{S}_{\alpha_s h}^{[2]} \circ \mathcal{S}_{\alpha_{s-1} h}^{[2]} \circ \cdots \circ \mathcal{S}_{\alpha_2 h}^{[2]} \circ \mathcal{S}_{\alpha_1 h}^{[2]},
\end{equation}
where the coefficients $\alpha_j$ are certain numbers chosen in such a way that the order of approximation of $\psi_h^{[r]}$ is $r \ge 2$. 

Methods 
(\ref{compo.1})  constitute a very efficient class of numerical integrators for (\ref{ivp}), especially when $f$ has some geometric
properties that is advantageous to preserve under discretization. In fact, composition methods preserve any group properties 
shared by the basic scheme $\mathcal{S}_h^{[2]}$:
symplecticity, phase space volume, first integrals, symmetries, etc., and therefore they provide a general and flexible class of geometric numerical
integrators \cite{hairer06gni}. 

If in addition the sequence of coefficients in (\ref{compo.1}) is \emph{left-right palindromic}, i.e., $\alpha_{s+1-j} = \alpha_j$, $j=1,2,\ldots$,
then $\psi_h^{[r]}$ is also time-symmetric, i.e., it verifies for small $h$
\begin{equation}   \label{t-s}
  \psi_h^{[r]} \circ \, \psi_{-h}^{[r]} = \mathrm{id},
\end{equation}
and are of even order, $r=2n$ \cite{hairer06gni}. 
Methods of this class are called symmetric compositions of symmetric schemes  \cite{mclachlan95otn} and constitute an efficient way to
construct high-order approximations, due to the reduction in the
number of order conditions to be satisfied.

Nevertheless, the fact that composition methods of order greater than 2 require some negative coefficients $\alpha_j$ typically imposes
 severe stability restrictions on the time step,
especially when dealing with semidiscretized PDEs \cite{blanes05otn}. To try to remedy this situation, \emph{complex} coefficients with positive real part have been considered in the literature for this class of problems \cite{blanes13oho,castella09smw,hansen09hos}. In fact, methods with
complex coefficients have also been used even for problems when the presence of negative fractional time steps is not problematic
\cite{bandrauk06cis,chambers03siw}.

If composition methods with complex coefficients are applied to a real vector field $f$ in (\ref{ivp}), then the approximation
 $x_1$ at the end of the first time step $t_1=t_0 + h$ will be
of course complex, whereas the exact solution is real. A relevant issue is then how to proceed with the computation of the trajectory. Two possibilities exist: either one pursues the determination
of the solution for $t > t_1$ with the previously computed value of $x_1 \in \mathbb{C}$ and project on the real axis only when output is desired (after, say,
$N$ integration steps) or one just discards the imaginary part of $x_1$ and initiates the
next step only with $\Re(x_1)$. In both cases, however, the favourable properties the composition inherits from the basic scheme 
$\mathcal{S}_h^{[2]}$ (such as symplecticity) are most often lost. 
Previous (heuristic) analyses show that, generally speaking, the later approach provides a better description of the problem 
\cite{blanes13oho,blanes10smw,chambers03siw}.

One purpose of this work is to provide a rigorous justification of this observation and determine up to what degree symplecticity, say, is still preserved
when using complex coefficients. We show, in particular, that a $2n$-th order left-right palindromic composition 
with complex coefficients, when projected on the real axis after each step, still preserves the time-symmetry and other relevant
geometric properties up to order $4n+1$. Moreover, we also show that it is possible to preserve the time-symmetry up to a higher order
by considering another family
of compositions, namely methods of the form (\ref{compo.1}) with
the special symmetry
\begin{equation}   \label{ss.1}
    \alpha_{s+1-j} = \bar{\alpha}_j, \qquad j=1,2, \ldots,
\end{equation}
where $\bar{\alpha}_j$ denotes the complex conjugate of $\alpha_j$. For obvious reasons, we call the resulting scheme  
\begin{equation}  \label{compo.2}
  \psi_h^{[r]} = \mathcal{S}_{\bar{\alpha}_1 h}^{[2]} \circ \mathcal{S}_{\bar{\alpha}_{2} h}^{[2]} \circ \cdots \circ 
  \mathcal{S}_{\alpha_2 h}^{[2]} \circ \mathcal{S}_{\alpha_1 h}^{[2]},
\end{equation}
a \emph{symmetric-conjugate} composition.
 The simplest method within this family is of course
\begin{equation}  \label{s3a}
   \psi_h^{[3]} = \mathcal{S}_{\alpha h}^{[2]} \circ    \mathcal{S}_{\bar{\alpha} h}^{[2]}.
\end{equation}
If
\[
    \alpha = \frac{1}{2} \pm i \frac{\sqrt{3}}{6},
\]
then $\psi_h^{[3]}$  is of order 3, but if one considers instead only its real part,
\begin{equation}   \label{ss34}
   \Re(  \psi_h^{[3]} ) = \frac{1}{2} \left(  \psi_h^{[3]} +  \overline{\psi}_h^{[3]}    \right) = \hat{R}^{[4]}_h,
\end{equation}
or equivalently, if one projects $\psi_h^{[3]}$ at each time step on the real axis,
then the resulting scheme $\hat{R}^{[4]}_h$ is an integration method of order 4. This fact has been previously recognized  by several authors 
\cite{bandrauk06cis,chambers03siw}.
Although $\hat{R}^{[4]}_h$ is no longer time-symmetric, it nevertheless verifies 
\[
   \hat{R}^{[4]}_{-h} \circ \hat{R}^{[4]}_h = \mathrm{id} + \mathcal{O}(h^8)
\]      
when the vector field $f$ in (\ref{ivp}) is real \cite{casas21cop}. Moreover,  if $f$ is a (real) Hamiltonian vector field 
and $\mathcal{S}_h$ is a 2nd-order symplectic integrator, then $\hat{R}^{[4]}_h$ is also symplectic with an error $\mathcal{O}(h^8)$. 

Motivated by this feature of scheme $\hat{R}^{[4]}_h$ and the excellent preservation properties of methods (\ref{compo.2})  reported in
particular in \cite{blanes10smw},
we shall analyze in detail this class of integrators. In doing so, we will pay special attention to their preservation properties, and 
eventually we will propose new schemes requiring less number of stages for achieving a given order than left-right palindromic compositions
when projected on the real axis after each integration step.

\section{Compositions of a second-order symmetric scheme}
\label{sec.2}

\subsection{Integrators and series of operators}

If $\varphi_{h}$ is the exact flow of the equation (\ref{ivp}), then for each  infinitely differentiable map $g$,
 the function $g(\varphi_{h}(x))$ admits an expansion of the form \cite{arnold89mmo,sanz-serna94nhp}
\[
g(\varphi_{h}(x)) = \exp(h F)[g](x) = g(x) + \sum_{k\geq 1}
\frac{h^k}{k!} F^k[g](x), 
\]
where $F$ is the Lie derivative associated with $f$,
\begin{equation}  \label{eq.1.1b}
  F = \sum_{i\ge1} \, f_i(x) \, \frac{\partial }{\partial x_i}.
\end{equation}
Analogously, for the class of integrators $\psi_{h}$ we are considering,  one can associate a series of linear operators so that
\[
    g(\psi_{h}(x)) = \exp(Y(h))[g](x), \quad \mbox{ with }  \quad Y(h) = \sum_{j \ge 1} h^j Y_j
\]
for all functions $g$ \cite{blanes08sac}. Here $Y_{j}$ are operators depending on the particular method considered.
The integrator $\psi_{h}$ is of order $r$ if
\[ 
   Y_1 = F \qquad \mbox{ and } \qquad Y_j = 0 \;\; \mbox{ for } \;\; 2 \le j \le r.
\]   
For the adjoint integrator, defined as $\psi_{h}^* := \psi_{-h}^{-1}$, one clearly has
\[
   g(\psi_{h}^*(x)) = \exp \big(-Y(-h) \big)[g](x).
\]   
Notice that $\psi_{h}$ is time-symmetric, i.e., it verifies (\ref{t-s}), if and only if $\psi^*_h = \psi_h$,  and therefore $Y(h)$ only contains odd powers of $h$.
In particular, time-symmetric methods are of even order.

According with these comments, the time-symmetric 2nd-order scheme $\psis_h^{[2]}$ can be associated with the series
\begin{equation}  \label{gc.1}
   \Phi^{[2]}(h) = \exp(h F + h^3 Y_3 + h^5 Y_5 + \cdots + h^{2k+1} Y_{2k+1} + \cdots).
\end{equation}
Then, the series  of operators associated with the integrator (\ref{compo.1}) can be determined by applying the Baker--Campbell--Hausdorff
formula, thus resulting in
\begin{equation}   \label{vh.1}
   \Psi^{[r]}(h) = \exp( V(h) ),
\end{equation}   
where $V(h)$ is formally given by
\[
   V(h) = h w_{1} F + h^3 w_{3,1} Y_3 + h^4 w_{4,1} [F, Y_3] + h^5 \big(w_{5,1} Y_5 + w_{5,2}  [F,[F,Y_3]] \big) + \mathcal{O}(h^6).
\]
Here $[F,Y_3]$ stands for the Lie bracket of the operators $F$ and $Y_3$, etc. and 
\begin{equation}  \label{orcon1}
\begin{aligned}
   & w_1 = \sum_{j=1}^s \alpha_j, \qquad\quad   w_{3,1} = \sum_{j=1}^s \alpha_j^3,  \qquad\quad w_{5,1} = \sum_{j=1}^s \alpha_j^5, \\
   &  w_{4,1} = \frac{1}{2} \sum_{j=1}^{s-1} \left( \alpha_j^3 \left( \sum_{k=j+1}^s \alpha_k \right) -  \alpha_j \left( \sum_{k=j+1}^s \alpha_k^3 \right) \right) \\
   & w_{5,2} = \frac{1}{12} \sum_{j=1}^s \alpha_j^3 \left( \left( \sum_{k = 1}^{j-1} \alpha_{k} \right)^2 + \left( \sum_{k = j+1}^{s} \alpha_{k} \right)^2 -
      4 \sum_{k=1}^{j-1} \alpha_k \sum_{\ell=j+1}^s \alpha_{\ell} \right) \\
   &  \qquad\quad  -\frac{1}{12} \sum_{j=1}^s \alpha_j^4 \left( \sum_{k=1}^{j-1} \alpha_k + \sum_{k=j+1}^s \alpha_k \right).   
 \end{aligned}   
\end{equation}
(In the expression of $w_{5,2}$ above the sum is zero when the upper index is smaller than the lower index). In general, $V(h)$ is an element of the free Lie algebra $\mathcal{L}$ generated by $\{F, Y_3, Y_5, \ldots \}$ \cite{munthe-kaas99cia}, 
i.e., $V(h)$ is a linear combination
of $F, Y_3, Y_5, \ldots$, and all their nested Lie brackets,
\begin{equation}  \label{vh.2}
  V(h) =  h w_1 F + \sum_{n \ge 3} h^{n} \sum_{k=1}^{c(n)}  w_{n,k} E_{n,k}.
\end{equation}
Here $w_{n,k}$ are polynomials in the coefficients of the method, $E_{2n+1,1} = Y_{2n+1}$ and 
$E_{n,k}$, $k > 1$, are independent nested Lie brackets of $\{F, Y_3, Y_5, \ldots \}$ forming a basis of the homogeneous component $\mathcal{L}_n$
of  $\mathcal{L}$, with dimension $c(n)$ \cite{mclachlan02sm}. 
Thus, in particular, $\mathcal{L}_5$ has dimension $c(5) = 2$, and a basis is given by 
$\{E_{5,1} =Y_5,  E_{5,2} =[F,[F,Y_3]] \}$.

Method (\ref{compo.1}) is of order $r$ if $w_1 = 1$ and the polynomials $w_{n,k}$ vanish whenever $1 < n  \le r$, and $k=1,\ldots, c(n)$. 
The number of the resulting equations (the order
conditions) $N^{[r]}$ agrees of course with the sum of the dimensions $c(n)$, i.e.,
\[
 N^{[r]} = \sum_{n=1}^r c(n)
\] 
and is collected in Table \ref{tabla1} (second row) for the first values of $r$. A composition without any special symmetry would involve then at
least $s = N^{[r]}$ stages so as to have enough parameters to solve the equations.

\begin{table} 
\begin{center}
  \begin{tabular}{|c|cccccccc|} \hline
   \textit{Order} $r$ & 1 & 2 & 3 & 4 & 5 & 6 & 7  &  8  \\ \hline\hline
 $N^{[r]}$ (General) & 1 & 0 & 2 & 3 & 5 & 7 &  11 & 16   \\ \hline
 $N_P^{[r=2n]}$ (Palindromic) &  & 1 &  & 2 &  & 4 &   & 8   \\ \hline
 $N^{[r]} - c(2n)$ (Sym-Conjugate) &  & 1 &  & 2 &  & 5 &   & 11(9)   \\ \hline 
\end{tabular}
\end{center}
  \caption{\small{Total number of order conditions to achieve order $r$ for the method resulting from projecting after each step (i) the general composition (\ref{compo.1}) of time-symmetric 2nd-order methods (second row), a left-right palindromic composition (third row) and 
  a symmetric-conjugate composition (fourth row). It turns out that by solving only 9 order conditions one can achieve order 8 with symmetric-conjugate
  compositions.}}
  \label{tabla1}
\end{table}

\subsection{Left-right palindromic compositions}

Before establishing general results about preservation of properties of composition methods with complex coefficients after projection on the
real axis, it is worth to introduce the following definitions, as in \cite{casas21cop}:
\begin{definition}  \label{def1}
Let $\psi_h$ be a smooth and consistent integrator. Then
\begin{enumerate}
\item $\psi_h$ is said to be  {\em pseudo-symmetric} of {\em pseudo-symmetry order} $q$ if for all sufficiently small $h$, it is true that
\begin{align}
\label{eq:pseudosym0}
\psi_h^* = \psi_{h} + {\cal O}(h^{q+1}),
\end{align}
where the constant in the ${\cal O}$-term depends on bounds of derivatives of $\psi_h$. 
\item $\psi_h$  is  said to be {\em pseudo-symplectic} of {\em pseudo-symplecticity order} $p$ if for all sufficiently small $h$, the following relation holds true
when it is applied to a Hamiltonian system:
\begin{align}
\label{eq:pseudosymplec}
(\psi_h')^T \, J \, \psi_{h}' = J + {\cal O}(h^{p+1}),
\end{align}
where $J$ denotes the canonical symplectic matrix and the constant in the ${\cal O}$-term depends on bounds of derivatives of $\psi_h$.
\end{enumerate}
\end{definition}
\begin{remark}
A symmetric method is pseudo-symmetric of any order $q \in \N$, whereas a method of order $r$ is pseudo-symmetric of order $q \geq r$. 
A similar statement holds for symplectic methods. 
\end{remark}

We first proceed with left-right palindromic compositions. According to the considerations in the previous section, the 
series of operators  associated with such a method of order $2n$
is $\Psi^{[2n]}(h) = \exp( V(h) )$, with
\begin{equation}  \label{vhlr}
  V(h) =  h w_1 F + \sum_{j \ge n} h^{2j+1} \sum_{k=1}^{c(2j+1)} w_{2j+1,k} E_{2j+1,k} 
 \end{equation} 
and $w_{2j+1,k}$ have in general real and imaginary parts when $\alpha_j \in \mathbb{C}$. Then one has the following

\begin{proposition}   \label{prop.3}
Given $\psis_{h}^{[2]}$ a time-symmetric 2nd-order method, consider the left-right palindromic composition
\begin{equation}   \label{compo.2b}
   \psis_{h}^{[r]} =  \mathcal{S}_{\alpha_1 h}^{[2]} \circ \mathcal{S}_{\alpha_{2} h}^{[2]} \circ \cdots \circ \mathcal{S}_{\alpha_2 h}^{[2]} \circ \mathcal{S}_{\alpha_1 h}^{[2]}
\end{equation}
 of order $r=2n$, $n =2,3,\ldots$, when the coefficients $\alpha_j$ are complex numbers satisfying $2 (\alpha_1 + \alpha_2 + \cdots) = 1$. Then 
the method obtained by taking its real part,
\[
       \phi_h^{[2n]} \equiv   \frac{1}{2} (  \psis_h^{[2n]} +  \bar{\psis}_h^{[2n]} ),
\]
is of the same order $r=2n$  and pseudo-symmetric of order $q=4n+1$
  when the vector field $f$ in  (\ref{ivp}) is real.
  If in addition $f$ is a (real) Hamiltonian vector field and $\psis_{h}^{[2]}$ is a symplectic integrator, then $\phi_{h}^{[2n]}$ is pseudo-symplectic
of order $p=4n+1$.
\end{proposition}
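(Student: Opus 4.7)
The plan is to work with the associated operator series of $\phi_h^{[2n]}$, namely $\Phi(h) := \tfrac{1}{2}\bigl(\exp V(h) + \exp \overline{V(h)}\bigr)$, and to expand it to sufficient order in $h$. The crucial structural input is that the left-right palindromic symmetry of $\psi_h^{[2n]}$ together with the order-$2n$ condition forces
\[
V(h) = hF + \tilde V(h), \qquad \tilde V(h) = \sum_{j\ge n} h^{2j+1} V_j = \mathcal{O}(h^{2n+1}),
\]
with only odd powers of $h$. Writing $\tilde V = \tilde V_R + i \tilde V_I$ with $\tilde V_R, \tilde V_I$ real Lie series (meaningful because $F, Y_3, Y_5, \ldots$ are real operators whenever $f$ is real), the fact that $\tilde V^2 = \mathcal{O}(h^{4n+2})$ lets me work modulo $h^{4n+2}$ throughout.

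Using the first-order variation formula $\partial_\epsilon \exp(A+\epsilon B)|_{\epsilon=0} = \int_0^1 e^{uA} B e^{(1-u)A}\,du$ with $A = hF$, I expand
\[
\exp(hF + \tilde V) \equiv \exp(hF) + \int_0^1 e^{uhF}\,\tilde V\, e^{(1-u)hF}\,du \pmod{h^{4n+2}},
\]
and the same with $\tilde V$ replaced by $\overline{\tilde V}$. Averaging yields
\[
\Phi(h) \equiv \exp(hF) + \int_0^1 e^{uhF}\,\tilde V_R\, e^{(1-u)hF}\,du \pmod{h^{4n+2}},
\]
from which order $2n$ is immediate. For pseudo-symmetry, the oddness of $\tilde V$ in $h$ gives $\Phi(-h) \equiv \exp(-hF) - \int_0^1 e^{-uhF}\tilde V_R e^{-(1-u)hF}\,du$ modulo $h^{4n+2}$; inverting (again discarding the quadratic-in-$\tilde V_R$ remainder) produces $\Phi(-h)^{-1} \equiv \exp(hF) + \int_0^1 e^{(1-u)hF}\tilde V_R e^{uhF}\,du$, and the change of variable $u \mapsto 1-u$ identifies this with $\Phi(h)$, so $\phi_h^* = \phi_h + \mathcal{O}(h^{4n+2})$, i.e., pseudo-symmetry of order $4n+1$.

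For pseudo-symplecticity, I use that $\psi_h^{[2n]}$ is symplectic as a formal power series, being a composition of formally symplectic maps $\mathcal{S}_{\alpha_j h}^{[2]}$ (the symplecticity identity being valid for complex $\alpha_j$ as well), so $(\psi_h')^T J\, \psi_h' = J$ and likewise for $\overline{\psi}_h$. Substituting $\phi_h' = \tfrac{1}{2}(\psi_h' + \overline{\psi}_h')$ and using $(\psi_h')^T J = J(\psi_h')^{-1}$ one arrives at
\[
(\phi_h')^T J\, \phi_h' = \tfrac{1}{2} J + \tfrac{1}{4} J\bigl(M_h + M_h^{-1}\bigr), \qquad M_h := (\psi_h')^{-1}\overline{\psi}_h',
\]
where symplecticity forces $\overline{M_h} = M_h^{-1}$. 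Writing $M_h = I + N_h$, a Neumann-series expansion gives $M_h + M_h^{-1} - 2I = N_h^2 - N_h^3 + \cdots$, so everything reduces to the estimate $N_h^2 = \mathcal{O}(h^{4n+2})$. This holds because $\psi_h^{[2n]}$ has order $2n$ with real limit $\varphi_h$, forcing $\overline{\psi}_h' - \psi_h' = -2i\,\mathrm{Im}(\psi_h') = \mathcal{O}(h^{2n+1})$ and hence $N_h = \mathcal{O}(h^{2n+1})$.

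The main technical subtlety is precisely this real-pivot observation: identifying $N_h$, and symmetrically the imaginary component $\tilde V_I$, as $\mathcal{O}(h^{2n+1})$ rather than merely $\mathcal{O}(h)$. Once that is in hand, every remaining step is routine formal-power-series bookkeeping modulo $h^{4n+2}$, and the crux of the argument is the interplay between reality of $f$, the order condition, and the $\mathcal{O}(h^{4n+2})$ smallness of $\tilde V^2$ and $N_h^2$.
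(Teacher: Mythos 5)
Your proposal is correct, and it reaches the conclusions of Proposition~\ref{prop.3} by a route that differs from the paper's in both halves of the argument. For the order and pseudo-symmetry claims, the paper sandwiches the series between two factors $\exp(hF/2)$ via the symmetric BCH formula and then compares $\tfrac12(\e^{W}+\e^{\overline W})$ with $\e^{(W+\overline W)/2}$, finding the explicit leading defect $-\tfrac12 h^{4n+2}\,\Im(V_{2n+1})^2$; you instead use the first-variation (Duhamel) expansion of $\exp(hF+\tilde V)$ and exploit $\tilde V^2=\mathcal{O}(h^{4n+2})$ directly, with the adjoint identified by a Neumann inversion and the substitution $u\mapsto 1-u$. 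Both arguments rest on the same two structural facts — $\tilde V$ is odd in $h$ and starts at $h^{2n+1}$ — and your version is slightly more economical, though it does not exhibit the leading error term. For pseudo-symplecticity the divergence is more substantial: the paper deduces it (and, as it notes, preservation of \emph{any} group property inherited from $\psis_h^{[2]}$ — volume, unitarity, etc.) in one stroke, by showing that $\Phi^{[2n]}(h)$ coincides up to $\mathcal{O}(h^{4n+2})$ with the manifestly structure-preserving, time-symmetric composition $\e^{hF/2}\e^{Z}\e^{hF/2}$ with $Z$ real and odd in $h$; your Jacobian computation with $M_h=(\psi_h')^{-1}\overline{\psi}_h'$ and the estimate $N_h=M_h-I=\mathcal{O}(h^{2n+1})$ is a clean, self-contained alternative, but it is tailored to symplecticity and would have to be adapted property by property. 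The one point worth flagging explicitly (which you do, correctly) is that the symplecticity identity for $\mathcal{S}_{\alpha h}^{[2]}$ persists for complex $\alpha$ by analytic continuation, and that the reality of $f$ and the order-$2n$ condition are what force $\Im(\psi_h')=\mathcal{O}(h^{2n+1})$; this is indeed the crux, and your handling of it is sound.
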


\begin{proof}
In this and the remaining proofs we apply a similar approach as in \cite{blanes99eos} for determining the pseudo-symplectic character of methods obtained by
polynomial extrapolation. An important ingredient is the symmetric BCH formula \cite{blanes16aci}: given $X$ and $Y$ two non-commuting
operators, then
\[
   \exp( \frac{1}{2} X) \, \exp(Y) \, \exp(\frac{1}{2} X) = \exp(Z),
\]
where $Z = \sum_{n \ge 0} Z_{2n+1}$ and $Z_{2n+1}$, $n \ge 1$, is a linear combination of nested brackets involving $2n+1$ operators
$X$ and $Y$. The first terms read
\[
  Z_1 = X + Y, \qquad\qquad Z_3 = -\frac{1}{24} [X,[X,Y]] - \frac{1}{12} [Y,[X,Y]].
\]     
To begin with, we write the expression (\ref{vhlr}) associated with (\ref{compo.2b}) as 
\[
   V(h) = h F + h^{2n+1} V_{2n+1} + h^{2n+3} V_{2n+3}   + \cdots
\]
where $V_{2n+j}$, $j=1,3,\ldots$ are \emph{complex} operators in the free Lie algebra
 generated by $\{ F, Y_{3}, Y_{5}, \ldots \}$. In consequence, the series corresponding to $\phi_{h}^{[2n]}$ is
\[
    \Phi^{[2n]}(h) = \frac{1}{2}  \exp((V(h)) +  \frac{1}{2}  \exp(\overline{V}(h)),
\]
which can be written in fact as 
\begin{equation} \label{vv.1}
    \Phi^{[2n]}(h) =  \exp \left( \frac{h}{2} F \right) \left(  \frac{1}{2}  \exp((W(h))   + \frac{1}{2} \exp(\overline{W}(h))
      \right) \exp \left( \frac{h}{2} F \right), 
\end{equation}
where $W(h)$ is determined by applying the symmetric BCH formula to $\exp(W(h)) = \exp(-h F/2) \, \exp(V(h)) \, \exp(-h F/2)$, thus
leading to
\[
     W(h)  =  h^{2n+1} V_{2n+1} + h^{2n+3} \left( V_{2n+3} + \frac{1}{24} [F,[F,V_{2n+1}]] \right) + \mathcal{O}(h^{2n+5}).
\]
Now the idea is to write $\Phi^{[2n]}(h)$ in (\ref{vv.1}) as $\e^{h F/2} \, \e^{(W + \overline{W})/2} \, \e^{h F/2} + \mathcal{O}(h^q)$,
for some $q$. Therefore, we have to analyze $\frac{1}{2} (\e^W + \e^{\overline{W}}) - \e^{(W + \overline{W})/2}$. To this end,
first we note that 
\[
  W(h) + \overline{W}(h) = 2 h^{2n+1} \Re(V_{2n+1}) + 2 h^{2n+3} \left( \Re(V_{2n+3}) + \frac{1}{24} [F,[F,\Re(V_{2n+1})]] \right) +  \mathcal{O}(h^{2n+5}),
\]
i.e., only contains odd powers of $h$ and
\[
 \frac{1}{8} \big( W(h) + \overline{W}(h) \big)^2  =  \frac{1}{2} h^{4n+2} \, \Re(V_{2n+1})^2 +    \mathcal{O}(h^{4n+4}), 
\]
whereas
\[
 \frac{1}{4} \big( W(h)^2 + \overline{W}(h)^2 \big) =    \frac{1}{2} h^{4n+2} \left( \Re(V_{2n+1})^2 - \Im(V_{2n+1})^2 \right) +    \mathcal{O}(h^{4n+4}). 
\]
In consequence, 
\[
\begin{aligned}
  & \frac{1}{2} \left( \e^{W(h)}   +  \e^{\overline{W}(h)} \right) - \e^{ \frac{1}{2} \big( W(h) + \overline{W}(h) \big)} = 
   \frac{1}{4} \big( W(h)^2 + \overline{W}(h)^2 \big) - \frac{1}{8}  \big( W(h) + \overline{W}(h) \big)^2 +  \mathcal{O}(h^{4n+4}) \\
   & \quad =  - \frac{1}{2} h^{4n+2} \Im(V_{2n+1})^2  +  \mathcal{O}(h^{4n+4})
\end{aligned}   
\]
so that
\[
   \Phi^{[2n]}(h)  =  \exp \left( \frac{h}{2} F \right)  \exp \left( \frac{1}{2} \big( W(h) + \overline{W}(h) \big) \right) \exp \left( \frac{h}{2} F \right)
   +  \mathcal{O}(h^{4n+2}) 
\]
whence the following statements follow at once:
\begin{itemize}
  \item   Method (\ref{compo.2b}) is of order $2n$, since $ \Phi^{[2n]}(h)  = \exp( h F) +  \mathcal{O}(h^{2n+1})$.
  \item Since $Z = (W(h) + \overline{W}(h))/2$ only contains odd powers of $h$, 
  then $\e^{\frac{h}{2} F} \e^{Z} \e^{\frac{h}{2} F}$ is a time-symmetric composition and $\phi_{h}^{[2n]}$
   is pseudo-symmetric of order $4n+1$.
  \item Let us suppose that scheme (\ref{compo.2b}) is applied to a Hamiltonian system and that $\psis_{h}^{[2]}$ is symplectic. 
  Since $Z$ is an operator in the free Lie algebra
  generated by $\{ F, Y_{3}, Y_{5}, \ldots \}$, clearly the composition  $\e^{\frac{h}{2} F} \e^{Z} \e^{\frac{h}{2} F}$ is symplectic.
  As a matter
  of fact, this can be extended to any geometric property the differential equation (\ref{ivp}) has: volume-preserving, unitary, etc., as long as the basic scheme
  $\psis_{h}^{[2]}$ preserves this property.
\end{itemize}
\end{proof}

\

As an example, let us consider the well known 4th-order palindromic scheme
used in the triple jump procedure \cite{yoshida90coh},
\begin{equation}  \label{13b}
   \psis_h^{[4]} =  \mathcal{S}_{\alpha_1 h}^{[2]} \circ  \, \mathcal{S}_{\alpha_2 h}^{[2]} \circ \, \mathcal{S}_{\alpha_1 h}^{[2]},
\end{equation}
with
\[
   \alpha_1 = \frac{1}{2 - 2^{1/3} \e^{2 i k \pi/3}}, \qquad \alpha_2 = 1- 2 \alpha_1, \qquad k =1,2.
\]
(Note that with $k=0$ one gets the usual real solution). Then $ \phi_h^{[4]} = \Re( \psis_h^{[4]})$ is still a method of order 4, but pseudo-symmetric and
pseudo-symplectic of order 9.

As is well known, the number of order conditions required by left-right palindromic compositions to achieve order $r=2n$ is \cite{mclachlan02sm}
\[
N_P^{[2n]}=\sum_{j=1}^nc(2j-1).
\]
In consequence, a palindromic composition requires at least $2N_P^{[2n]}-1$ stages so as to have the same number of (complex) coefficients 
as order conditions. The values of  $N_P^{[2n]}$ to achieve orders $2n = 2, 4, 6, 8$ are
collected in the third row of Table \ref{tabla1}.

\subsection{Symmetric-conjugate compositions}

In contrast with left-right palindromic compositions, even and odd order methods  of the form (\ref{compo.2}) 
exist, but their behavior with respect to structure preservation is different when they are projected on the real axis at each step. The reason
lies in the special structure of the associated series of differential operators. More specifically, we have the following
 
 \begin{lemma} \label{lem.1}
  Let $\psis_{h}^{[2]}$ be a time-symmetric 2nd-order method for (\ref{ivp}), with $f$ real, and consider the composition method
  \[
  \psi_h^{[r]} = \mathcal{S}_{\alpha_s h}^{[2]} \circ \mathcal{S}_{\alpha_{s-1} h}^{[2]} \circ \cdots \circ \mathcal{S}_{\alpha_2 h}^{[2]} \circ \mathcal{S}_{\alpha_1 h}^{[2]},
 \]
 verifying
\[
\alpha_{s+1-j} = \bar{\alpha}_j, \qquad j=1,2, \ldots.
\]
Then $ \psi_h^{[r]}$ has an associated series of operators  $\Psi^{[r]}(h) = \exp( V(h) )$, with
\begin{equation}  \label{vh.2b}
  V(h) =  h w_1 F + \sum_{j \ge 1} h^{2j+1} \sum_{k=1}^{c(2j+1)} \mu_{2j+1,k} E_{2j+1,k} + i \sum_{j \ge 2} h^{2j} \sum_{k=1}^{c(2j)} \sigma_{2j,k} E_{2j,k}. 
\end{equation}
Here $w_1, \mu_{2j+1,k}, \sigma_{2j,k}$ are \emph{real} polynomials depending on the real and imaginary parts of the parameters $\alpha_i$. 
In other words, the terms of even powers in $h$ in $V(h)$ are pure imaginary, whereas terms of odd powers in $h$ are real.
\end{lemma}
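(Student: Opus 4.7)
The plan is to reduce the Lemma to a single algebraic identity at the level of the Lie series, namely $\overline{V}(h) = -V(-h)$, which forces the parities stated in (\ref{vh.2b}). Once this identity is established, the conclusion is immediate: writing $V(h) = \sum_{n\ge 1} h^n \sum_k w_{n,k} E_{n,k}$ in the basis $\{E_{n,k}\}$ of the free Lie algebra generated by the real operators $\{F,Y_3,Y_5,\ldots\}$, reality of the basis gives $\overline{V}(h) = \sum_n h^n \sum_k \overline{w_{n,k}} E_{n,k}$, while $-V(-h) = \sum_n (-1)^{n+1} h^n \sum_k w_{n,k} E_{n,k}$; equating coefficients yields $\overline{w_{n,k}} = (-1)^{n+1} w_{n,k}$, so $w_{n,k}$ is real for $n$ odd and purely imaginary for $n$ even, which is exactly (\ref{vh.2b}).

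The bulk of the proof is therefore establishing $\overline{V}(h) = -V(-h)$, and I would do this by showing at the integrator level that $\overline{\psi_h^{[r]}} = (\psi_h^{[r]})^{\ast}$. First I would compute the adjoint directly from the composition: since $\mathcal{S}_{h}^{[2]}$ is time-symmetric, $(\mathcal{S}_{\alpha h}^{[2]})^{-1} = \mathcal{S}_{-\alpha h}^{[2]}$, and hence
\[
(\psi_h^{[r]})^{\ast} = (\psi_{-h}^{[r]})^{-1} = \mathcal{S}_{\alpha_1 h}^{[2]} \circ \mathcal{S}_{\alpha_2 h}^{[2]} \circ \cdots \circ \mathcal{S}_{\alpha_s h}^{[2]},
\]
i.e., the same composition with the coefficients in reversed order. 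Next, because $f$ is real, the differential operators $F, Y_3, Y_5,\ldots$ in the series (\ref{gc.1}) of $\mathcal{S}_h^{[2]}$ are all real, which means that complex conjugation acts on the formal series of $\mathcal{S}_{\alpha h}^{[2]}$ simply by sending $\alpha \mapsto \bar\alpha$. Consequently $\overline{\psi_h^{[r]}} = \mathcal{S}_{\bar\alpha_s h}^{[2]} \circ \cdots \circ \mathcal{S}_{\bar\alpha_1 h}^{[2]}$, and invoking the symmetric-conjugate condition $\bar\alpha_j = \alpha_{s+1-j}$ turns this into $\mathcal{S}_{\alpha_1 h}^{[2]}\circ \cdots \circ \mathcal{S}_{\alpha_s h}^{[2]}$, which coincides with $(\psi_h^{[r]})^{\ast}$ above.

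Finally I would translate the identity $\overline{\psi_h^{[r]}} = (\psi_h^{[r]})^{\ast}$ to the associated series. Writing $\Psi^{[r]}(h) = \exp(V(h))$, the adjoint corresponds to $\exp(-V(-h))$ while the conjugate corresponds to $\exp(\overline{V}(h))$, so the integrator-level equality gives $\exp(\overline{V}(h)) = \exp(-V(-h))$, whence $\overline{V}(h) = -V(-h)$ in the free Lie algebra, completing the argument by the coefficient comparison sketched above. Note in particular that the case $n=1$ gives $\overline{w_1} = w_1$, so the normalisation hypothesis implicit in order one is compatible with $w_1\in\mathbb{R}$.

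The main obstacle I anticipate is the second step, namely rigorously justifying that complex conjugation of the integrator corresponds to conjugation of the coefficients $\alpha_j$ at the level of the Lie series. This rests on the reality of $f$ (hence of $F$ and of every $Y_{2k+1}$ appearing in (\ref{gc.1})), and on the fact that the BCH combinatorics producing $V(h)$ from the $\alpha_j$ involves only polynomial operations on those coefficients, so complex conjugation commutes with the construction. Once this is in hand, the rest of the proof is essentially a parity argument in $h$.
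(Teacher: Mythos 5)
Your proposal is correct and follows essentially the same route as the paper: establish $(\psi_h^{[r]})^{\ast} = \overline{\psi}_h^{[r]}$ (hence $\overline{V}(h) = -V(-h)$) and then compare coefficients in the basis $\{E_{n,k}\}$ to get $\overline{w}_{n,k} = (-1)^{n+1} w_{n,k}$. You in fact spell out the identification of the adjoint with the reversed composition and the action of conjugation on the coefficients in more detail than the paper does, which is a welcome addition rather than a deviation.
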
 
\begin{proof} 
  We start by noticing that, since $\psis_{h}^{[2]}$ is a time-symmetric integrator, the adjoint $(\psi_{h}^{[r]})^*$
   is precisely the \emph{complex conjugate} of $\psi_{h}^{[r]}$, i.e., $(\psi_h^{[r]})^* = \overline{\psi}_{h}^{[r]}$. In consequence, the corresponding series of operators
  are also identical,
  \[
    \overline{\Psi}^{[r]}(h)  =  (\Psi^{[r]})^*(h)  
  \]
  and therefore $\overline{V}(h) = -V(-h)$. From (\ref{vh.2}), these series are respectively 
\[
\begin{aligned}  
     \overline{V}(h)  & = h \overline{w}_1 F + \sum_{j \ge 1} h^{2j+1} \sum_{k \ge 1} \overline{w}_{2j+1,k} E_{2j+1,k}  + 
       \sum_{j \ge 1} h^{2j} \sum_{k \ge 1} \overline{w}_{2j,k} E_{2j,k}  \\
   -V(-h)  & = h w_1 F + \sum_{j \ge 1} h^{2j+1} \sum_{k \ge 1} w_{2j+1,k} E_{2j+1,k}  -
       \sum_{j \ge 1} h^{2j} \sum_{k \ge 1} w_{2j,k} E_{2j,k},  
 \end{aligned}
 \]
 so that
 \[
         \overline{w}_1 = w_1, \qquad    \overline{w}_{2j+1,k} = w_{2j+1,k}, \qquad  \overline{w}_{2j,k} = - w_{2j,k},
\]  
and (\ref{vh.2b}) is obtained with $\mu_{2j+1,k} = w_{2j+1,k} \in \mathbb{R}$, $\sigma_{2j,k} = \Im(w_{2j,k}) \in \mathbb{R}$.
\end{proof}

\

\noindent
From this lemma one has the following general result concerning the preservation of properties of symmetric-conjugate compositions.

\begin{proposition} \label{prop.2}
Given $\psis_{h}^{[2]}$ a time-symmetric 2nd-order method, let us consider the sym\-me\-tric-conjugate composition
\[
   \psi_{h}^{[r]} =  \mathcal{S}_{\bar{\alpha}_1 h}^{[2]} \circ \mathcal{S}_{\bar{\alpha}_{2} h}^{[2]} \circ \cdots \circ \mathcal{S}_{\alpha_2 h}^{[2]} \circ \mathcal{S}_{\alpha_1 h}^{[2]}
\]
of order $r \ge 3$ and its real part, i.e.,
\begin{equation}  \label{compo.3}
   \hat{R}_{h}^{[2n]} = \frac{1}{2} \left( \psi_{h}^{[r]} + \overline{\psi}_{h}^{[r]} \right),
\end{equation}
applied to the differential equation (\ref{ivp}) with a real vector field $f$.
Then the following statements concerning the pseudo-symmetry and pseudo-sym\-plec\-ti\-ci\-ty of  $\hat{R}_{h}^{[2n]}$ hold:
\begin{itemize}
  \item[(a)] If $\psi_h^{[r]}$ is of odd order, $r=2n-1$, $n=2,3,\ldots$, then  $\hat{R}_{h}^{[2n]}$ 
  is a method of order $2n$ and pseudo-symmetric of order $q = 4n-1$. 
    If in addition $f$ is a (real) Hamiltonian vector field and $\psis_{h}^{[2]}$ is a symplectic integrator, then $\hat{R}_{h}^{[2n]}$ is pseudo-symplectic of order $p=4n-1$.
  \item[(b)] If $\psi_h^{[r]}$ is of even order, $r=2n$, $n=2,3,\ldots$, then  $\hat{R}_{h}^{[2n]}$ is a method of order $2n$ 
  and pseudo-symmetric of order $q=4n+3$. 
  If in addition $f$ is a (real) Hamiltonian vector field and $\psis_{h}^{[2]}$ is a symplectic integrator, then $\hat{R}_{h}^{[2n]}$ is pseudo-symplectic of order $p=4n+3$.
\end{itemize}
\end{proposition}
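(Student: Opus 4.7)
My plan is to adapt the strategy used in the proof of Proposition \ref{prop.3} to the structural information provided by Lemma \ref{lem.1}. Starting from the series associated with $\hat R_h^{[2n]}$,
$$\Phi(h) = \tfrac{1}{2}\bigl(\exp(V(h)) + \exp(\overline{V}(h))\bigr),$$
I would first factor out $\exp(hF/2)$ on both sides and write
$$\Phi(h) = \exp(hF/2)\,\tfrac{1}{2}\bigl(\exp(W(h)) + \exp(\overline{W}(h))\bigr)\,\exp(hF/2),$$
where $W(h)$ is defined by $\exp(W) = \exp(-hF/2)\exp(V)\exp(-hF/2)$ via the symmetric BCH formula. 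A key preliminary observation is that the parity pattern of Lemma \ref{lem.1} (real coefficients at odd $h$-powers, pure imaginary at even $h$-powers) passes from $V$ to $W$: since each homogeneous term of the symmetric BCH expansion involves an odd total number of factors drawn from $\{-hF, V\}$, a direct parity count shows that, in any nested bracket of $h$-degree $n$, the number of pure-imaginary factors has the same parity as $n$, and hence $W$ has the same structural form as $V$.

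Expanding the two exponentials to second order exactly as in Proposition \ref{prop.3} then yields
$$\tfrac{1}{2}\bigl(\exp(W) + \exp(\overline{W})\bigr) = \exp\bigl(\tfrac{1}{2}(W+\overline{W})\bigr) - \tfrac{1}{2}\,\Im(W)^2 + \mathcal{R}(h),$$
where $\mathcal{R}(h)$ collects the cubic and higher-order contributions and is of strictly higher $h$-order than $\Im(W)^2$ in both cases. The analysis thus reduces to locating the leading $h$-order of $\Im(W)$. In case (a), the hypothesis $r = 2n-1$ forces all coefficients of $V-hF$ of $h$-degree $\le 2n-1$ to vanish; since $2n$ is even, the first non-trivial contribution is pure imaginary at order $h^{2n}$, so $\Im(W) = \mathcal{O}(h^{2n})$ and $\Im(W)^2 = \mathcal{O}(h^{4n})$. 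In case (b), where $r = 2n$, the degree-$h^{2n}$ even-power coefficient is also killed, so the first non-trivial term of $V-hF$ is the \emph{real} contribution at $h^{2n+1}$ and the first pure-imaginary term is delayed to $h^{2n+2}$, whence $\Im(W)^2 = \mathcal{O}(h^{4n+4})$.

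Substituting back, and noting that $\tfrac{1}{2}(W+\overline{W}) = \Re(W)$ contains only odd powers of $h$, I conclude that $\Phi(h)$ equals $\exp(hF/2)\,\exp(\Re(W))\,\exp(hF/2)$ up to a remainder of order $\mathcal{O}(h^{4n})$ in case (a) and $\mathcal{O}(h^{4n+4})$ in case (b). The surrogate on the right-hand side is a time-symmetric composition lying in the free Lie algebra generated by $\{F,Y_3,Y_5,\ldots\}$, so the announced pseudo-symmetry orders $q=4n-1$ and $q=4n+3$ follow immediately, and whenever $\psis_h^{[2]}$ is symplectic and $f$ is Hamiltonian, the same surrogate is symplectic, yielding $p=q$ in both cases. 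The order statements for $\hat R_h^{[2n]}$ itself can be read from the leading behavior of $\Phi(h)-\exp(hF)$: in case (a) the spurious $i h^{2n}$ term cancels under projection, upgrading the order from $2n-1$ to $2n$, while in case (b) the real $h^{2n+1}$ term survives and the order remains $2n$.

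The main technical obstacle I foresee is the careful bookkeeping required to verify that the remainder $\mathcal{R}(h)$ does not corrupt the estimates, especially in case (b) where the target precision $\mathcal{O}(h^{4n+4})$ is the more demanding. Once the leading $hF$ has been absorbed into the outer $\exp(hF/2)$ factors, both $\Re(W)$ and $\Im(W)$ start at $h^{2n}$ or higher, and a short inspection of the Taylor expansions shows that every term in $\mathcal{R}(h)$ carries at least one factor of $\Im(W)^2$ together with at least one extra factor of $\Re(W)$ or $\Im(W)$, so it is safely absorbed in the remainder. Once this verification is in place, the remaining arguments proceed along the same lines as in the proof of Proposition \ref{prop.3}.
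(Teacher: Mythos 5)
Your argument is correct and follows essentially the same route as the paper's proof: the same conjugation by $\exp(hF/2)$, the same use of the symmetric BCH formula to define $W(h)$, and the same identification of the leading defect $-\tfrac12\,\Im(W)^2$, whose order ($h^{4n}$ in case (a), $h^{4n+4}$ in case (b)) gives the stated pseudo-symmetry and pseudo-symplecticity orders. One small slip in your (otherwise welcome, since the paper merely asserts it after computing the first terms) justification that $W$ inherits the parity structure of $V$: since the total number of factors in each symmetric-BCH term is odd, the number of pure-imaginary factors in a bracket of $h$-degree $n$ has the parity of $n+1$, not of $n$ --- which is exactly what makes even-degree terms pure imaginary and odd-degree terms real, so the conclusion you draw is the correct one.
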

\begin{proof}
We apply the same strategy as in the proof of Proposition \ref{prop.3}.

\noindent
(a) Since $r=2n-1$, then the series of operators associated with $\psi_{h}^{[r]}$ is 
  $\Psi^{[r]}(h) = \exp(V(h))$, with
\begin{equation}  \label{expre.V}
   V(h) = h F + i h^{2n} V_{2n} + h^{2n+1} V_{2n+1} + i h^{2n+2} V_{2n+2}   + \cdots
\end{equation}
where $V_{2n+j}$, $j=0,1,2,\ldots$ are, according to Lemma \ref{lem.1}, \emph{real} operators in the free Lie algebra
 generated by $\{ F, Y_{3}, Y_{5}, \ldots \}$. From here the series corresponding to $\hat{R}_{h}^{[2n]}$,
\[
    \mathcal{R}^{[2n]}(h) = \frac{1}{2} \left( \exp((V(h)) + \exp(\overline{V}(h)) \right),
\]
can be written as 
\[
    \mathcal{R}^{[2n]}(h) =  \exp \left( \frac{h}{2} F \right) \left(  \frac{1}{2}  \exp((W(h))   + \frac{1}{2} \exp(\overline{W}(h))
      \right) \exp \left( \frac{h}{2} F \right), 
\]
where $W(h)$ is obtained from $\exp(W(h)) = \exp(-h F/2) \, \exp(V(h)) \, \exp(-h F/2)$ as
\[
    W(h) =  i h^{2n} W_{2n} + h^{2n+1} W_{2n+1} + i h^{2n+2} W_{2n+2} + h^{2n+3} W_{2n+3} + i h^{2n+4} W_{2n+4} + \mathcal{O}(h^{2n+5})
\]
with    
\[
\begin{aligned}
     & W_{2n} = V_{2n}, \qquad\quad  W_{2n+1} =  V_{2n+1}, \qquad\quad W_{2n+2} =  V_{2n+2} + \frac{1}{24} [F,[F,V_{2n}]]  \\
     & W_{2n+3} = V_{2n+3} + \frac{1}{24} [F,[F,V_{2n+1}]], \\
     & W_{2n+4} = V_{2n+4} + \frac{1}{24} [F,[F,V_{2n+2}]] + \frac{1}{1920} [F,[F,[F,[F,V_{2n}]]]].
\end{aligned}
\]
In general, terms in $W(h)$ of odd powers in $h$ are real and terms of even powers of $h$ are pure imaginary.
Then, it is clear that
\[
  W(h) + \overline{W}(h) = 2 h^{2n+1} V_{2n+1} + 2 h^{2n+3} \left( V_{2n+3} + \frac{1}{24} [F,[F,V_{2n+1}]] \right) +  \mathcal{O}(h^{2n+5})
\]
and only contains odd powers of $h$. Furthermore,
\begin{eqnarray*}
     \big( W(h) + \overline{W}(h) \big)^2 & = & 4 h^{4n+2} V_{2n+1}^2 + 4 h^{4n+4} \Big( V_{2n+1} ( V_{2n+3} + \frac{1}{24} [F,[F,V_{2n+1}]]) + \\
   & &  +   ( V_{2n+3} + \frac{1}{24} [F,[F,V_{2n+1}]]) V_{2n+1} \Big) + \mathcal{O}(h^{4n+6})  
\end{eqnarray*}
and
\[
  W(h)^2  +  \overline{W}(h)^2   = - 2 h^{4n} V_{2n}^2 + \mathcal{O}(h^{4n+2}).
\]
Proceeding as before, 
\[
   \frac{1}{2} \left( \e^{W(h)}   +  \e^{\overline{W}(h)} \right) - \e^{ \frac{1}{2} \big( W(h) + \overline{W}(h) \big)} = 
    - \frac{1}{2} h^{4n} V_{2n}^2  +  \mathcal{O}(h^{4n+2}),  
\]
so that 
\[
   \mathcal{R}^{[2n]}(h) = \exp \left( \frac{h}{2} F \right)  \exp \left( \frac{1}{2} \big( W(h) + \overline{W}(h) \big) \right) \exp \left( \frac{h}{2} F \right)
   +  \mathcal{O}(h^{4n}),
\]   
whence the conclusions follow readily.

\noindent
(b) We proceed along the same lines as in the preceding case for even order, $r=2n$. Now
\[
   V(h) = h F +  h^{2n+1} V_{2n+1} + i h^{2n+2} V_{2n+2} +  h^{2n+3} V_{2n+3}   + \cdots
\]
and   
\[
    W(h) =   h^{2n+1} W_{2n+1} + i h^{2n+2} W_{2n+2} + h^{2n+3} W_{2n+3} + i h^{2n+4} W_{2n+4} + h^{2n+5} W_{2n+5} + \mathcal{O}(h^{2n+6})
\]
with    
\[
\begin{aligned}
     & W_{2n+1} = V_{2n+1}, \qquad\quad  W_{2n+2} =  V_{2n+2}, \qquad\quad W_{2n+3} =  V_{2n+3} + \frac{1}{24} [F,[F,V_{2n+1}]]  \\
     & W_{2n+4} = V_{2n+4} + \frac{1}{24} [F,[F,V_{2n+2}]], \\
     & W_{2n+5} = V_{2n+5} + \frac{1}{24} [F,[F,V_{2n+3}]] + \frac{1}{1920} [F,[F,[F,[F,V_{2n+1}]]]],
\end{aligned}
\]
whence, as before,
\[
  W(h) + \overline{W}(h) = 2 h^{2n+1} W_{2n+1} + 2 h^{2n+3} W_{2n+3} +  2 h^{2n+5} W_{2n+5} + \mathcal{O}(h^{2n+7}).
\]
On the other hand,
\[
\begin{aligned}
  & W(h)^2  =  h^{4n+2} W_{2n+1}^2 + i h^{4n+3} \big( W_{2n+1} W_{2n+2} + W_{2n+2} W_{2n+1} \big)   \\
    &  + h^{4n+4} \big( W_{2n+1} W_{2n+3} + W_{2n+3} W_{2n+1} - W_{2n+2}^2 \big) \\
    & + i h^{2n+5} \big( W_{2n+1} W_{2n+4} + W_{2n+4} W_{2n+1} + W_{2n+2} W_{2n+3} + W_{2n+3} W_{2n+2} \big) 
+ \mathcal{O}(h^{4n+6}),  
\end{aligned}
\]
whereas
\[
\begin{aligned}
  & \overline{W}(h)^2  =  h^{4n+2} W_{2n+1}^2 - i h^{4n+3} \big( W_{2n+1} W_{2n+2} + W_{2n+2} W_{2n+1} \big)   \\
    &  + h^{4n+4} \big( W_{2n+1} W_{2n+3} + W_{2n+3} W_{2n+1} - W_{2n+2}^2 \big) \\
    & - i h^{2n+5} \big( W_{2n+1} W_{2n+4} + W_{2n+4} W_{2n+1} + W_{2n+2} W_{2n+3} + W_{2n+3} W_{2n+2} \big) 
+ \mathcal{O}(h^{4n+6}).  
\end{aligned}
\]
An straightforward calculation shows that
\[
\begin{aligned}
  & \frac{1}{2} \left( \e^{W(h)}   +  \e^{\overline{W}(h)} \right) - \e^{ \frac{1}{2} \big( W(h) + \overline{W}(h) \big)} = 
   \frac{1}{4} \big( W(h)^2 + \overline{W}(h)^2 \big) - \frac{1}{8}  \big( W(h) + \overline{W}(h) \big)^2 + \cdots \\
   & \quad =  - \frac{1}{2} h^{4n+4} W_{2n+2}^2  +  \mathcal{O}(h^{4n+6})
\end{aligned}   
\]
and finally
\begin{equation}  \label{lep}
   \mathcal{R}^{[2n]}(h) = \exp \left( \frac{h}{2} F \right)  \exp \left( \frac{1}{2} \big( W(h) + \overline{W}(h) \big) \right) \exp \left( \frac{h}{2} F \right)
   +  \mathcal{O}(h^{4n+4}).
\end{equation}   
Now $\hat{R}_{h}^{[2n]}$ is of orden $2n$, but the time-symmetry (and symplecticity) holds up to order $4n+3$.
\end{proof}

\

Although apparently a symmetric-conjugate composition requires solving $N^{[r]}$ equations to achieve order $r$, just as general compositions,
this is not the case, however, when one is interested in projecting on the real axis, since the symmetry in the coefficients introduces additional reductions. 
As Lemma \ref{lem.1} and Proposition
\ref{prop.2} show, for a scheme of order $r=2n$, the $c(2n)$ order conditions at order $h^{2n}$ are pure imaginary and so it is not necessary
to solve them. Therefore, the number of conditions is actually $N^{[2n]}-c(2n)$. This number is collected in the last row of Table \ref{tabla1}. 
This saving in the cost comes of course at the price of reducing the 
preservation of time-symmetry (or symplecticity, etc.) from order $4n+3$ to $4n-1$.

We can proceed in the same vein, since the $c(2n-2)$ order conditions at order $h^{2n-2}$ are also pure imaginary. Now, however, the resulting
schemes after projection are only pseudo-symmetric of pseudo-symmetry order $4n-5$. If $4n-5>2n$, or equivalently if  $2n>5$, then we
still have a method of order $r=2n$ obtained from a symmetric-conjugate composition with  $N^{[2n]}-c(2n)-c(2n-2)$ stages if the
corresponding order conditions have solutions.

This can be generalized as follows:
\begin{proposition}
 Let 
\[
   \psi_{h}^{[r]} =  \mathcal{S}_{\bar{\alpha}_1 h}^{[2]} \circ \mathcal{S}_{\bar{\alpha}_{2} h}^{[2]} \circ \cdots \circ \mathcal{S}_{\alpha_2 h}^{[2]} \circ \mathcal{S}_{\alpha_1 h}^{[2]}
\]
be a symmetric-conjugate composition of order $r=2n$ after projection on the real axis. 
If $4n-(4q+1)>2n$ for some $q\geq 0$ (or equivalently if $2n>4q+1$), then the
number of order conditions to be satisfied by $\psi_{h}^{[r]}$ to get a pseudo-symmetric scheme of 
pseudo-symmetry order $4n-(4q+1)$ after projection on the real axis is
\[
  N^{[2n]}-\sum_{j=0}^{q} c(2n-2j).
\]
\end{proposition}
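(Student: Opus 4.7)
My plan is to generalize the argument used in the proof of Proposition~\ref{prop.2}(b) by bookkeeping which of the $N^{[2n]}$ order conditions must actually be enforced once the scheme is projected on the real axis. The central ingredient is Lemma~\ref{lem.1}, which splits $V(h)$ into a real part supported on odd powers of $h$ and a pure-imaginary part supported on even powers of $h$. A pure-imaginary coefficient at some even order $2n-2j$ is annihilated linearly by the real-part projection, so it can only enter $\mathcal{R}^{[2n]}(h)=\tfrac{1}{2}(e^V+e^{\bar V})$ through quadratic or higher nonlinearities of the exponential; this is what allows one to leave the $\sum_{j=0}^q c(2n-2j)$ imaginary conditions at orders $2n, 2n-2, \ldots, 2n-2q$ unsolved without spoiling order $2n$.

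Next, I would imitate the conjugation trick of Proposition~\ref{prop.2} and write $\Psi^{[r]}(h)=e^{hF/2}\,e^{W(h)}\,e^{hF/2}$ via the symmetric BCH formula, so that $W(h)$ inherits the same parity structure as $V(h)$. Decomposing $W=A+iB$ with $A,B$ real, $A$ containing only odd powers of $h$ and $B$ only even powers, a noncommutative Taylor expansion (the natural generalization of the explicit computations already carried out for $q=0$ in Proposition~\ref{prop.2}) yields
\[
   \tfrac{1}{2}\bigl(e^{W}+e^{\bar W}\bigr) - e^{A} \;=\; -\tfrac{1}{2}\, B^2 \;+\; (\text{terms with four or more }B\text{'s}),
\]
and therefore
\[
    \mathcal{R}^{[2n]}(h) \;=\; e^{hF/2}\, e^{A(h)}\, e^{hF/2} \;+\; O(h^{4n-4q}).
\]
The surviving core $e^{hF/2}e^{A}e^{hF/2}$ is time-symmetric since $A$ contains only odd powers of $h$, and it inherits any group property preserved by $\mathcal{S}_h^{[2]}$ (symplecticity, volume, unitarity, etc.).

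The conclusion then follows by inserting the assumed structure. With all odd-power conditions solved up to order $2n-1$ and all even-power conditions solved up to $2n-2q-2$, $B(h)$ starts at $h^{2n-2q}$, so the correction $-\tfrac{1}{2}B^2$ is of order $h^{4n-4q}$. The hypothesis $2n>4q+1$ gives $4n-4q\ge 2n+1$, hence $\hat R_h^{[2n]}$ is of order $2n$; simultaneously the departure from the time-symmetric (or symplectic) core is confined to $O(h^{4n-4q})$, yielding pseudo-symmetry (and, in the Hamiltonian case, pseudo-symplecticity) of order $4n-4q-1 = 4n-(4q+1)$. Counting the solved conditions as $N^{[2n]}$ minus the $\sum_{j=0}^{q} c(2n-2j)$ unsolved imaginary ones produces exactly the stated total.

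The main technical point is the verification that $-\tfrac{1}{2}B^2$ really is the leading-order discrepancy between $\tfrac{1}{2}(e^W+e^{\bar W})$ and $e^A$ for noncommuting $A$ and $B$. This is handled by expanding the two exponentials and grouping according to the number of $B$-factors: terms with an odd number of $B$'s carry a single imaginary unit and cancel between $W$ and $\bar W$, while the lowest even-$B$-count contribution comes from the quadratic term in the exponential and equals precisely $-B^2$. The rest of the argument is bookkeeping via Lemma~\ref{lem.1}.
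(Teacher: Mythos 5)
Your argument is correct and is essentially the same as the paper's own justification: the paper states this proposition without a separate formal proof, presenting it as the generalization of the proof of Proposition~\ref{prop.2} after working out the cases $q=0$ and $q=1$, and your $W=A+iB$ decomposition with the grouping by the number of $B$-factors is exactly that generalization made explicit. One cosmetic point: the remainder in your displayed identity also contains terms with exactly two $B$'s and at least one $A$-factor (e.g.\ $-\tfrac16(AB^2+BAB+B^2A)$), not only terms with four or more $B$'s; since $A=\mathcal{O}(h^{2n+1})$ and $B=\mathcal{O}(h^{2n-2q})$ these are of order higher than $h^{4n-4q}$, so the conclusion is unaffected.
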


\

The simplest example corresponds to scheme (\ref{s3a}): Part {\it (a)} of Proposition \ref{prop.2} with $r=3$ 
reproduces the result obtained in \cite{casas21cop} and summarized in section \ref{sec.1}: its real part renders a method
 of order 4 and pseudo-symmetric of order 7.  

If we consider instead a composition (\ref{compo.2}) of order $r=4$, then by taking the real part at each step we do not increase the order, but the pseudo-symmetry order is $q=11$ (instead of $7$). 
In view of Table \ref{tabla1}, it is worth remarking that, although the
symmetric-conjugate compositions require more order conditions to be satisfied than palindromic compositions for orders higher than four, the
methods resulting from projecting on the real axis
require less stages: thus, in particular, it is possible to achieve a 6th-order scheme with only 5 stages, whereas schemes based on palindromic
composition require at least 7 stages.

As an additional illustration, let us take the composition 
\begin{equation}  \label{s3}
     \psi_h^{[4]} = \mathcal{S}_{\bar{\alpha}_1 h}^{[2]} \circ  \, \mathcal{S}_{\alpha_2 h}^{[2]} \circ \, \mathcal{S}_{\alpha_1 h}^{[2]},
\end{equation}
with $s=3$. It is of order $r= 2n = 4$ if
\[
        \alpha_1 = \frac{1}{4} \pm i \frac{1}{4} \sqrt{\frac{5}{3}}, \qquad \alpha_2 = \frac{1}{2}.
\]
Taking its real part, $\Re( \psi_h^{[4]})$, results in a method
also of order 4, but pseudo-symmetric and pseudo-symplectic of order 11. Both schemes $\Re( \psi_h^{[3]})$, (eq. (\ref{ss34})), and $\Re( \psi_h^{[4]})$ are  
of order 4, but whereas the former requires two evaluations of $\psis_{h}^{[2]}$ (instead of three), the latter preserves qualitative properties up to a higher order.

\subsection{Example: harmonic oscillator}

At this point it may be illustrative to apply all the previous 4th-order methods obtained by projecting on the real axis to a  simple
example and check how different behaviors with respect to structure preservation manifest in practice. To this end we choose the
one-dimensional harmonic oscillator,
\[
  q' = p, \qquad p' = -q
\]
with Hamiltonian   
\begin{equation}  \label{hahaos}
H(q,p)=T(p)+V(q)=\frac{1}{2}p^{2}+\frac{1}{2}q^{2}. 
\end{equation}
Denoting by $M_{X}(h)$ the exact matrix evolution associated with the Hamiltonians $X=H$, $T$
and $V$, i.e., $x(h) = (q(h),p(h))^{T}=M_{X}(h)(q(0),p(0))^{T}$, then 
\[
M_{H}(h)=\left( 
\begin{array}{rr}
\cos (h) & \sin (h) \\ 
-\sin (h) & \cos (h)
\end{array}
\right) ,\quad M_{T}(h)=\left( 
\begin{array}{ll}
1 & h \\ 
0 & 1
\end{array}
\right) ,\quad M_{V}(h)=\left( 
\begin{array}{rr}
1 & 0 \\ 
-h & 1
\end{array}
\right) . 
\]
As our basic time-symmetric 2nd-order scheme $\psis_{h}^{[2]}$ we take the leapfrog/Strang integrator 
\begin{equation}  \label{leapfrog}
\psis_{h}^{[2]} = M_{T}(h/2)M_{V}(h)M_{T}(h/2)
\end{equation}
and form the 4th-order schemes $\Re(\psi_h^{[3]})$ (eq. (\ref{ss34})), $\Re(\psi_h^{[4]})$ (eq. (\ref{s3})), and $\Re(\psis_h^{[4]})$
(eq. (\ref{13b})). In this case, it is straightforward
to verify the order of the methods (by computing explicitly the difference $\psi_{h} - M_{H}(h) $ for each method $\psi_h$), the 
pseudo-symmetry order (by evaluating $\psi_{h} \circ \psi_{-h} - I$) and the pseudo-symplecticity order (for instance, by computing the determinant
of the corresponding approximation matrix). In all cases the result agrees with Propositions \ref{prop.3} and \ref{prop.2}.

We can also check the relative efficiency of the three schemes by computing the error in the energy along a time interval with different
time steps. Thus, 
Figure \ref{fig:ho} (top panel) shows this relative error in $H$ as a function of the number of evaluations of the basic second order method
$\psis_{h}^{[2]}$ when $q_0 = 2.5, p_0 = 0$ and the final time is $t_f = 650$. We see that the efficiency of schemes $\Re(\psis_h^{[4]})$ 
and $\Re(\psi_h^{[3]})$ is quite similar for relatively small values of $h$.

It is also illustrative to test the behavior of these schemes for very long time intervals. This is done in Figure \ref{fig:ho} (bottom)
for $t \in [0, 10^7]$ and constant step size $h=1/4$ for $\Re(\psi_h^{[4]})$ and $\Re(\psis_h^{[4]})$, and $h=1/6$ for $\Re(\psi_h^{[3]})$, so that
all schemes require the same computational effort. We see that even for large values of time $\Re(\psi_h^{[4]})$ does not exhibit
a secular component in the error in energy (one might need still larger final times), as is the case for compositions (\ref{compo.1}) involving real coefficients (see \cite{aubry98psr}, where this phenomenon is explained). 
In any case the results are consistent with Proposition \ref{prop.2} and in particular with expression (\ref{lep}).

\begin{figure}[h!] 
\begin{center}
\includegraphics[scale=0.95]{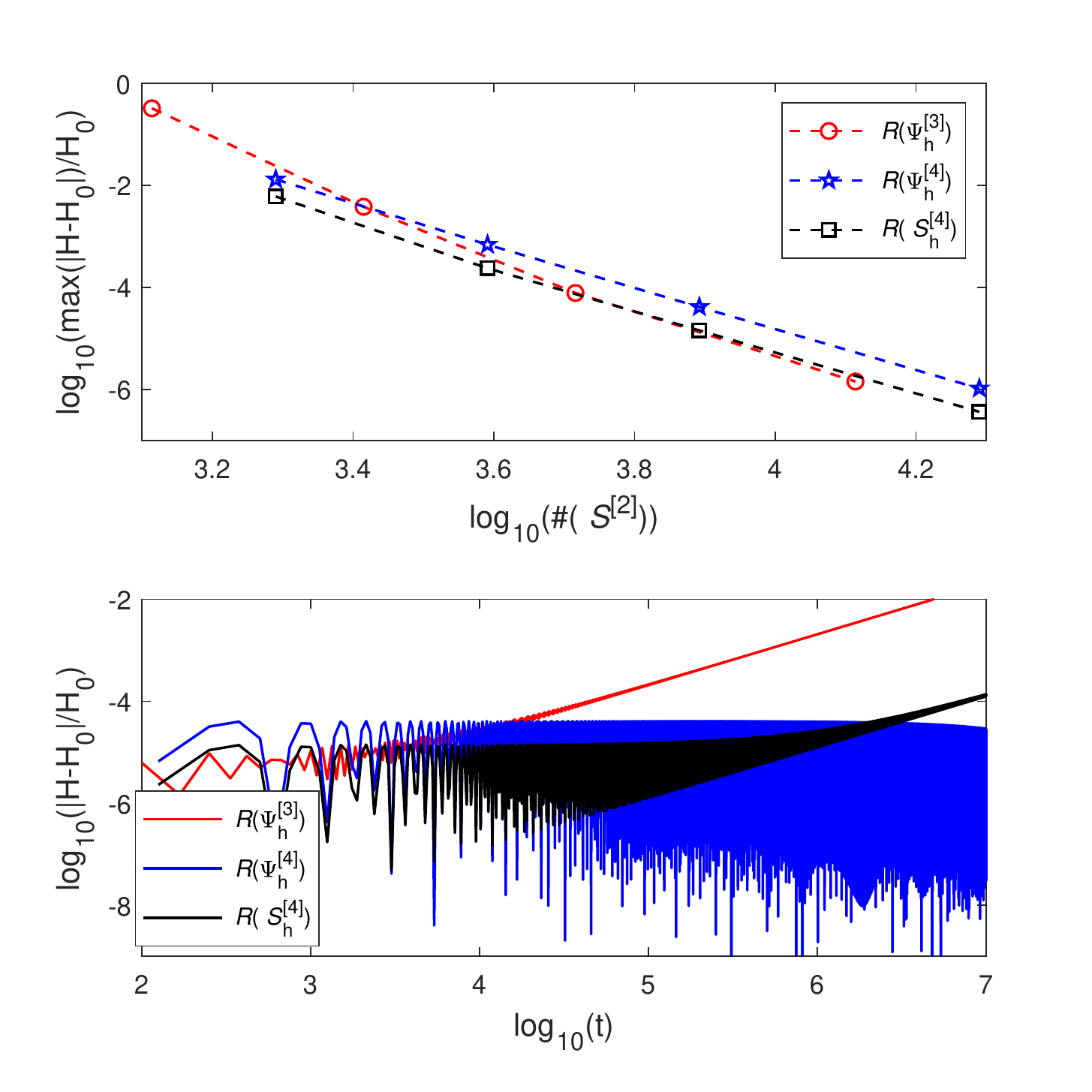}
\end{center}
\caption{\small Top: Relative error in energy vs. the number of evaluations of the basic $\psis_{h}^{[2]}$ scheme for the harmonic
oscillator for $t \in [0, 650]$. Bottom: Evolution of this error along the integration; here the same step size $h=1/4$ is used by 
$\Re(\psi_h^{[4]})$ and $\Re(\psis_h^{[4]})$, and $h=1/6$ by $\Re(\psi_h^{[3]})$. \label{fig:ho}}
\end{figure}

\section{Symmetric-conjugate composition methods obtained from a 2nd-order symmetric basic scheme}
\label{sec.3}

In this section we propose new methods constructed  from a basic time-symmetric 2nd-order basic scheme by  symmetric-conjugate composition. 
Since the case of
order 4 has been already analyzed in section \ref{sec.2}, here we study compositions with $s \ge 4$
stages. From Proposition \ref{prop.2} it is clearly advantageous to consider conjugate-symmetric compositions of odd order $r=2n-1$,
since taking the real part
leads automatically to a method of order $r=2n$ (but requiring only the computational cost of a method of order $2n-1$).

For simplicity, we denote in the sequel the general composition (\ref{compo.1}) by its sequence of coefficients:
\[
   (\alpha_s, \alpha_{s-1}, \ldots, \alpha_2, \alpha_1). 
\]
As a general rule for selecting a particular method, we follow the same criterion as in \cite{blanes13oho}, 
namely we first choose a subset of solutions with
small 1-norm of the coefficient vector $(\alpha_s, \ldots, \alpha_1)$ and, among them, choose the one that minimizes the norm of 
the main term in the corresponding truncation error.

\paragraph{Order 6.}

According to the previous treatment, one could consider in principle a symmetric-conjugate composition verifying the order conditions
\[
  w_1 = 1, \qquad w_{3,1} = 0, \qquad w_{5,1} = w_{5,2} = 0
\]
in (\ref{orcon1}), since $w_{4,1}$ is pure imaginary, so that when taking the real part of the composition, it does not contribute to the error. Four stages
would then be necessary to construct a 6th-order method. It turns out, however, that these equations do not admit solutions with the required
symmetry $\alpha_4 = \bar{\alpha}_1$, $\alpha_3 = \bar{\alpha}_2$, and thus at least  
 $s=5$ stages are necessary. The additional parameter can be used to solve the condition $w_{4,1}=0$ so as to achieve order 5. These
equations admit 
 5 solutions (plus the corresponding complex conjugate) for the sequence (\ref{compo.2}), i.e., for 
\begin{equation}   \label{5os}
 \psi_h^{[5]} =  \mathcal{S}_{\bar{\alpha}_1 h}^{[2]} \circ  \mathcal{S}_{\bar{\alpha}_2 h}^{[2]} \circ \mathcal{S}_{\alpha_3 h}^{[2]} \circ \mathcal{S}_{\alpha_2 h}^{[2]} \circ 
   \mathcal{S}_{\alpha_1 h}^{[2]}. 
\end{equation}
Among them, we select      
\[
\aligned
  &  \alpha_1 = 0.1752684090720741140583563 + 0.05761474413053870201304364 \,  i \\
  &  \alpha_2 = 0.1848736801929841604288898 - 0.1941219227572495885067758 \, i \\
  &  \alpha_3 = 0.2797158214698834510255077
\endaligned
\]
so that the real part
\[
   \hat{R}_h^{[6]} = \frac{1}{2} \left( \psi_{h}^{[5]} + \overline{\psi}_{h}^{[5]} \right)
\]
leads to a method of order 6 which, according with Part \textit{(a)} of Proposition \ref{prop.2}, 
is pseudo-symmetric and pseudo-symplectic of order 11, although it only has 5 stages (one of them being real). Notice that, according to 
Table \ref{tabla1}, $s=7$ stages are required to construct a conjugate-symmetric composition of order 6. Such a method was indeed
proposed and tested on several numerical examples in \cite{blanes10smw}, exhibiting a good long time behavior.  This behavior can
be explained by Proposition \ref{prop.2}, since
the corresponding method $\hat{R}_h^{[6]}$ constructed  by taking its real part is pseudo-symmetric and pseudo-symplectic of order 15.

The same number of stages ($s=7$) is also required by a palindromic composition to solve the 4 order conditions necessary to achieve order 6.
As shown in \cite{blanes13oho}, the best solution within this class is the composition S$_76$ previously found in \cite{chambers03siw}. 
By taking the real part, the
corresponding scheme $\phi_h^{[6]}$ is pseudo-symmetric of order 13 and involves 2 more stages than $\hat{R}_h^{[6]}$.

\paragraph{Order 8.}

In view of the structure of the series of operators $\exp(V(h))$ associated with a symmetric-conjugate composition, eq. (\ref{vh.2b}), it is clear
that if the order conditions
\begin{equation}  \label{oc8.1}
\begin{aligned}
   & w_1 = 1, \qquad w_{3,1}=0, \qquad w_{4,1} = 0, \qquad w_{5,1}=w_{5,2}=0, \\
   &  w_{7,1} = w_{7,2} = w_{7,3} = w_{7,4} = 0
\end{aligned}   
\end{equation}
are satisfied by $\psi_h^{[r]}$, then we get a 5th-order composition whose projection on the real axis is an 8th-order 
approximation. Here 
the condition $w_{4,1} = 0$ has to be included, since otherwise there appears a contribution in $h^8$. In consequence, at least $s=9$ stages
are necessary to solve equations (\ref{oc8.1}). We have in fact found 7 solutions (+ c.c.) with the required symmetry and positive real part. Among
them, we propose, according with the previous criteria, 
\begin{equation}  \label{order8.9}
  \aligned
         &  \alpha_1 =  \bar{\alpha}_{9} =  0.08848457824129988495666830 -  0.07427185309152124718276000 \ i \\
         &  \alpha_2 =  \bar{\alpha}_{8} =  0.15956870501880174198291033 + 0.02322565281009720913454462 \ i \\
         &  \alpha_3 =  \bar{\alpha}_7 =    0.09359461460849451904251162 + 0.13796356924496549819619086 \ i \\
         &  \alpha_4 =  \bar{\alpha}_6 =    0.15769224955121857774144315 -  0.07166960107892295549940996 \ i \\
         &  \alpha_5 =                               0.00131970516037055255293318         
  \endaligned
\end{equation}        
We thus have an 8th-order scheme obtained from a symmetric-conjugate 
composition of a basic 2nd-order time symmetric scheme requiring only 9 stages. This is the reason for the last entry in Table \ref{tabla1}.
Since the composition is of order 5, the final scheme will be
pseudo-symmetric of order 11. In case one is interested in preserving properties up to a higher order, then two more stages are necessary to
solve the order conditions at order 6. In that case, we have a symmetric-conjugate composition of order 7 involving $s=11$ stages which is
pseudo-symmetric of order 15. 

By contrast, $s=15$ stages are required to solve the 8 order conditions of an 8-th order left-right palindromic composition. 
In \cite{blanes13oho}, an optimized method
 of this class is proposed. Notice that, when one takes its real part, the final method is pseudo-symmetric of order 17. In any case, this
different behavior with respect to time-symmetry will be hardly visible in most practical situations. 
 
 We have carried out a numerical search of solutions such an 11-stage symmetric-conjugate composition, finding 29 sets of coefficients with
 positive real part. Among them, we recommend the following:
\begin{equation}  \label{order8}
  \aligned
         &  \alpha_1 =  \bar{\alpha}_{11} =  0.07683292597738736205503 - 0.05965805084613860757735 \ i \\
         &  \alpha_2 =  \bar{\alpha}_{10} =  0.12844482070368650612973 + 0.02479812697572531668668 \ i \\
         &  \alpha_3 =  \bar{\alpha}_9 =  0.06855723904168450389158 + 0.11276129325339482617990 \ i \\
         &  \alpha_4 =  \bar{\alpha}_8 =  0.11879414810128891257046 - 0.04055765731534572031090 \ i \\
         &  \alpha_5 =  \bar{\alpha}_7 =  0.10279469076169306832515 + 0.06735917341353737963638 \ i \\
         &  \alpha_6 =    0.009152350828519294056116       
  \endaligned
\end{equation}        

A method of order 10 within this family would require at least 17 stages, since one has to construct a symmetric-conjugate composition of order
5 (5 order conditions) also verifying the 4 conditions at order 7 and the 8 conditions corresponding to order 9. This method would be pseudo-symmetric
of order 11. The pseudo-symmetry can be raised up to order 15 by adding the 2 conditions at order 6 for a total of 19 stages. By contrast, a 
palindromic composition requires a minimum of 31 stages.

\section{Numerical examples}

\subsection{Kepler problem}
As a first example we take  the two-dimensional
Kepler problem with Hamiltonian
\begin{equation*}
   H(q,p) = T(p) + V(q) = 
	\frac{1}{2} p^T p - \mu \frac{1}{r}.
\end{equation*}
Here $q=(q_1,q_2), p=(p_1,p_2)$, $\mu=GM$, $G$ is the gravitational constant and $M$ is the sum of the masses of the two bodies. 
We take $\mu=1$ and initial conditions
\begin{equation*}
  q_1(0) = 1- e, \quad q_2(0) = 0, \quad p_1(0) = 0, \quad p_2(0) = \sqrt{\frac{1+e}{1-e}},
\end{equation*}
so that the trajectory corresponds to an ellipse of eccentricity $e=0.6$, and integrate with the 6th- and 8th-order methods resulting from
symmetric-conjugate and palindromic compositions after projecting on the real axis at each step. We denote them by $\mathcal{S}_p^{r(*)}$
and $\mathcal{S}_p^{r}$, respectively, where $r$ is the order of the method and $p$ is the number of stages (basic 2n-order integrators) involved
in the composition. Thus, 
\begin{itemize}
  \item $\mathcal{S}_5^{6(*)}$ refers to scheme (\ref{5os}); 
  \item $\mathcal{S}_7^{6(*)}$ is method $\mathrm{S}_7^{*6}$ of \cite{blanes10smw};
  \item $\mathcal{S}_7^{6}$ corresponds to composition $\mathrm{S}_76$ found in \cite{chambers03siw};
  \item $\mathcal{S}_{9}^{8(*)}$ refers to method (\ref{order8.9});
  \item $\mathcal{S}_{11}^{8(*)}$ denotes method (\ref{order8}); 
  \item $\mathcal{S}_{15}^{8}$  corresponds to composition $\mathrm{S}_{15}8$ obtained in \cite{blanes13oho}.
\end{itemize}
In our fist experiment we fix the final time $t_f = 650$ and compute the maximum of the relative error in the energy along the trajectory for
different step sizes. Thus, we end up with Figure \ref{fig:cost2} (top), which shows this relative error in energy vs. the
number of basic 2nd-order methods necessary for each scheme. 

\begin{figure}[h!] 
\begin{center}
\includegraphics[scale=0.92]{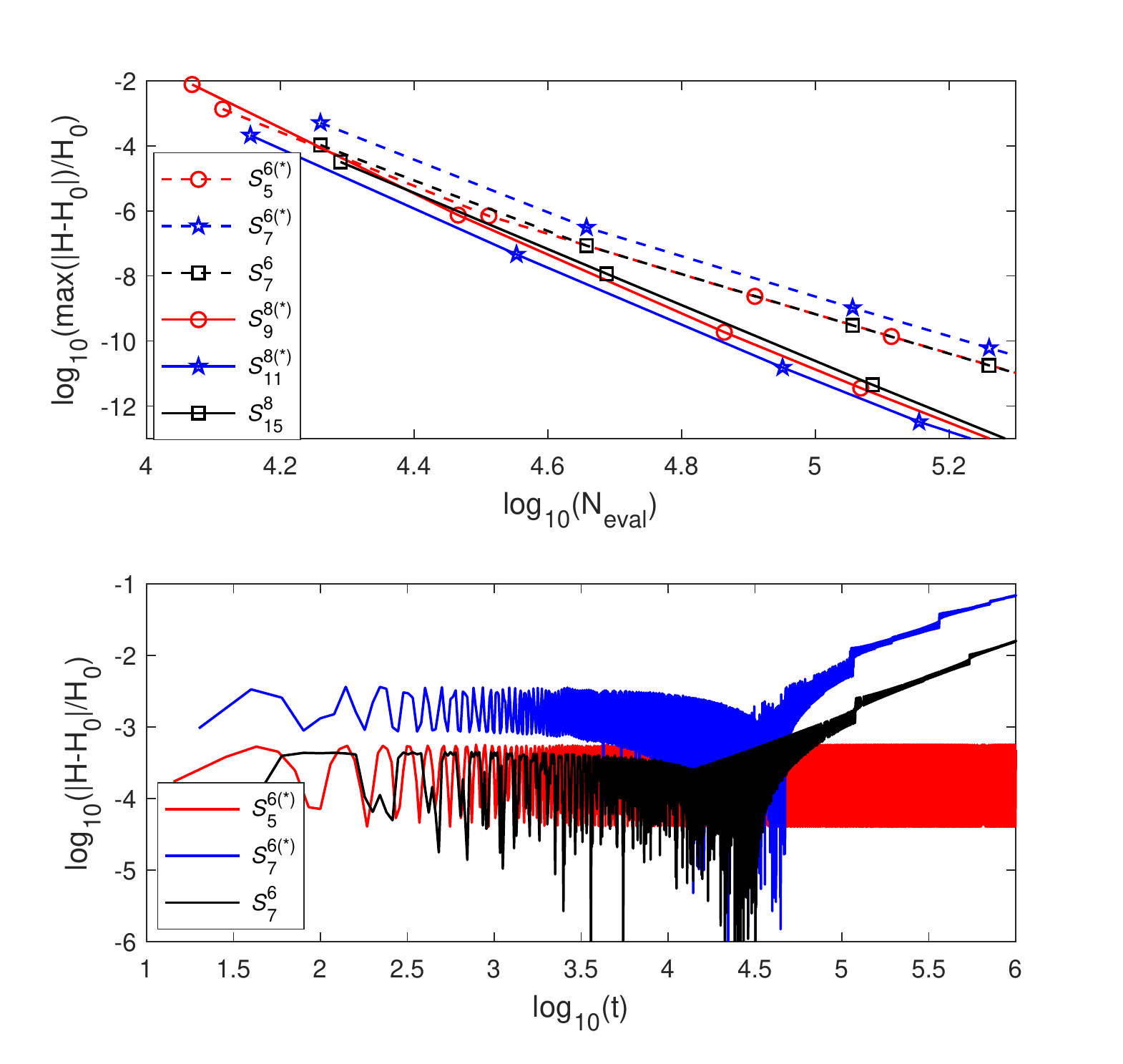}
\end{center}
\caption{\small Top: Relative error in energy vs. the number of evaluations of the basic $\psis_{h}^{[2]}$ scheme for the Kepler problem. Bottom: Evolution of this error along the integration of 6th-order methods. 
\label{fig:cost2}}
\end{figure}

Notice that the new 8th-order schemes obtained from symmetric-conjugate compositions are almost one order of magnitude
 more efficient than $\mathcal{S}_{15}{8}$
coming from a palindromic composition, due to the reduced number of basic 2nd-order integrators they require. In addition, it is also worth
remarking that these 8th-order methods work better than 6th-order methods even for large time steps, in contrast with what usually happens
with compositions with real coefficients.

In Figure \ref{fig:cost2} (bottom) we illustrate the long-time behavior of the previous 6th-order schemes. To this end, for the same initial
conditions, we integrate until the final time $t_f = 10^6$ with a constant step size in such a way that all methods involve the same
number of evaluations of the basic integrator. Specifically, $h= 2/5$ for both $\mathcal{S}_7^{6(*)}$ and $\mathcal{S}_7^{6}$, whereas
$h= 2/7$ for $\mathcal{S}_5^{6(*)}$. We see that the latter behaves as a symplectic integrator for the whole integration interval.

\subsection{The pendulum}

We consider next the one-dimensional pendulum with Hamiltonian
\begin{equation*}
   H(q,p) = T(p) + V(q) = 
	\frac{1}{2} p^2 + (1-\cos(q)).
\end{equation*}
We take as initial conditions $q_0=0, p_0=\alpha$, such that for small values of $\alpha$ this is close to a harmonic oscillator,
 whereas for $\alpha>2$ the pendulum gives full turns. We take $\alpha=\frac12$ (small oscillations) and $\alpha=5$ (full turns), integrate until $t_f=200\pi$ and measure the average error in energy as well as the average two-norm error in $q,p$ at times $t=k\cdot 2\pi, \ k=1,2,\ldots,100$ versus the number of stages. The results are shown in Figure~\ref{fig:pendulum}. We also observe the superiority of the higher order methods for nearly all accuracies and, among the eighth-order schemes, $\mathcal{S}_{11}^{8(*)}$ shows the best performance in all cases we have considered.

\begin{figure}[h!]
  \begin{center}
       \includegraphics[scale=0.74]{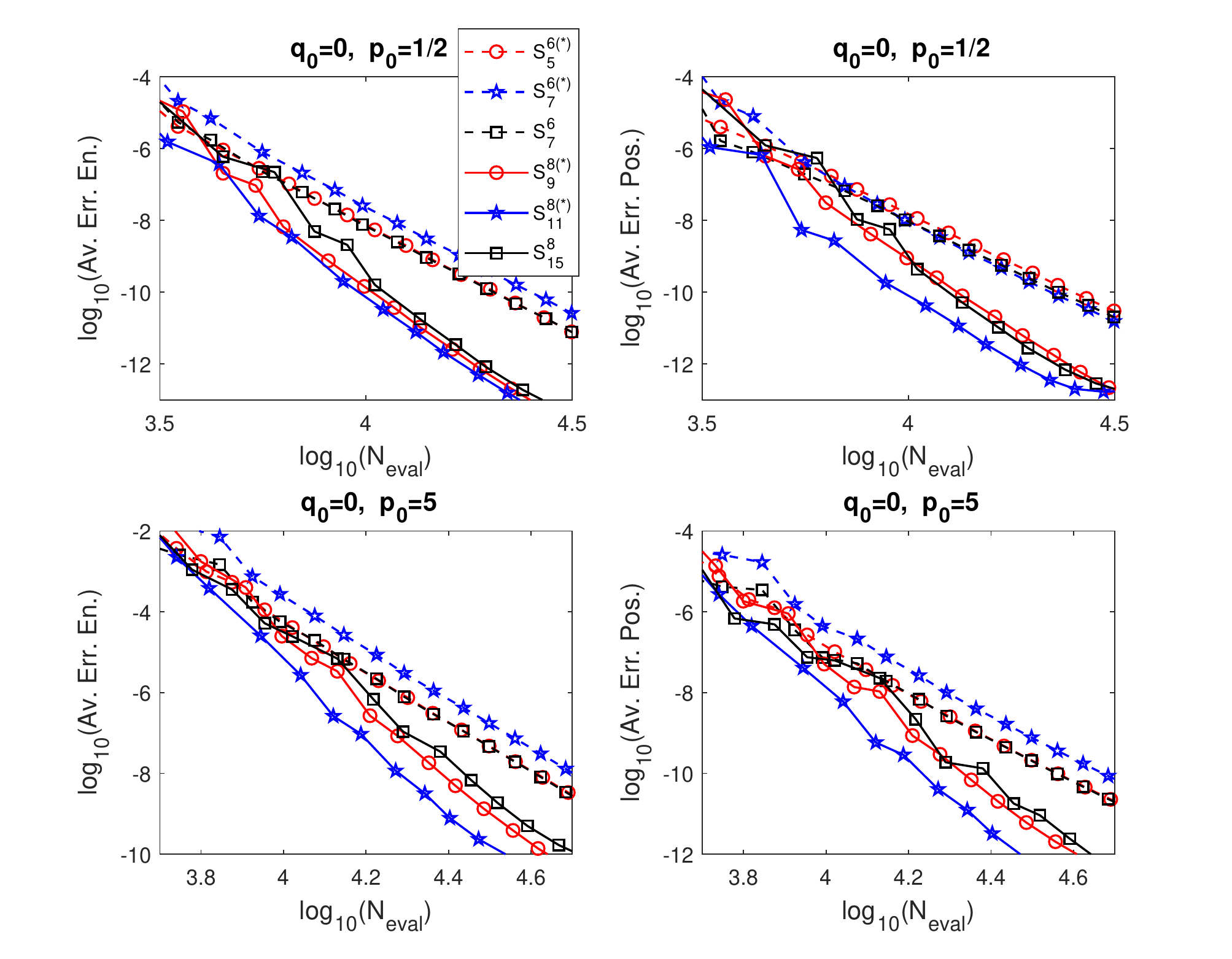}
      \caption{Average relative error in energy (left figures) and average error in positions (right) vs. the number of evaluations of the basic $\psis_{h}^{[2]}$ scheme for the pendulum.}
    \label{fig:pendulum}
  \end{center}
\end{figure}

\section{Stability}

Efficiency diagrams of Figures \ref{fig:cost2} and \ref{fig:pendulum} show a distinctive pattern: methods of order 8 are more efficient than schemes
of order 6 not only for small values of $h$, but in fact for the whole region of $h$ where errors are of practical interest. This comes in contrast with
what happens for methods with real coefficients: in that case the error (in a log-log plot) of a given integrator typically exhibits a corner where
higher error terms contribute by the same amount as the main error term. In this way, the errors of the different schemes form an envelope and one
is interested in selecting those particular methods lying close to this envelope.  

In reference \cite{mclachlan02foh} McLachlan presents a simple
model to determine in first approximation this corner by defining the \emph{elbow} of a given method as a crude estimate for the envelope and for
the nonlinear stability of the method. The idea is as follows: if one assumes that all vector fields $Y_j$ in (\ref{gc.1}) have the same order of
magnitude, and considers only a single error term $e_j$ at each order for a given palindromic composition (\ref{compo.2b}) of order $r$, 
then this effective error scales as 
\[
\mathcal{E} :=h^r e_{r+1} + h^{r+2} e_{r+3} + \cdots.
\]
Here $e_{j}$ includes a factor $s^r$ multiplying the error coefficient of
the $s$-stage composition,   so that it can be compared
to the reference value $1$ for the basic method $\psis_h^{[2]}$. Then the elbow is defined as
\[
   h^* := \sqrt{ \frac{e_{r+1}}{e_{r+3}}}
\]
thus indicating the value of $h$ below which the asymptotic error $\mathcal{O}(h^r)$ is observed, so that no method should be used
with time steps larger than $h^*$. What is remarkable about this model is that
both $\mathcal{E}$ and $h^*$ provide a good qualitative picture of palindromic compositions of different orders \cite{mclachlan02foh}.

We have carried out a similar treatment for the compositions (both palindromic and symmetric-conjugate) with complex coefficients of this work
and the corresponding results are collected in Table \ref{table2}. Symmetric-conjugate compositions are denoted by SC, whereas PR and PC
stand for palindromic compositions with real and complex coefficients, respectively. We also collect in the last column the effective stability
limit, i.e the supremum of the step sizes $h$ for which the approximate solution matrix for the harmonic oscillator
furnished by each scheme may be bounded independently of 
the iteration $n$ so that the error does not
grow exponentially as $n$ increases. The reference values of $h^*$ and $h_t/s$ for the basic integrator $\psis_h^{[2]}$
are respectively 1 and 2.

\begin{table}[ht]
\begin{center}
\begin{tabular}{|c|c|c|c|c|c|}\hline
 \multicolumn{6}{|c|}{\bfseries Order 4}\\ \hline
 Method & $s$ & $e_5$ & $e_7$ & $h^*$ & $h_t/s$  \\ \hline
  SC  & 2  & 1.7778 & 2.3704 & 0.8660 & 1.7320 \\
  SC & 3 & 2.2500 & 8.4375 & 0.5164 & 0.8622 \\
  PR & 3 & 428.60 & 18222 & 0.1534 & 0.5245 \\
  PC & 3 & 1.9562 & 3.0189 & 0.8050 & 1.3771 \\ \hline
\end{tabular}  

\

\begin{tabular}{|c|c|c|c|c|c|}\hline
 \multicolumn{6}{|c|}{\bfseries Order 6}\\ \hline
 Method & $s$ & $e_7$ & $e_9$ & $h^*$ & $h_t/s$  \\ \hline
  SC  & 5  & 4.4951 & 44.651 & 0.3173 & 0.6172 \\
  SC & 7 & 4.5667 & 147.577 & 0.1759 & 0.4457 \\
  PR & 7 & 104518 & $9.7 \times  10^6$ & 0.1038 & 0.3242 \\
  PC & 7 & 4.3876 & 92.115 & 0.2182 & 0.4482 \\ \hline
\end{tabular}  

\

\begin{tabular}{|c|c|c|c|c|c|}\hline
 \multicolumn{6}{|c|}{\bfseries Order 8}\\ \hline
 Method & $s$ & $e_9$ & $e_{11}$ & $h^*$ & $h_t/s$  \\ \hline
  SC  & 9  & 14.060 & 5.996 & 1.5312 & 0.8638 \\
  SC & 11 & 7.4082 & 2.4572 & 1.7363 & 0.9353 \\
  PC & 15 & 2.0506 & 10.429 & 0.4434 & 0.7896 \\ \hline
\end{tabular}  
\end{center}
  \caption{\small{Scaled error coefficients for different compositions of order 4, 6 and 8 with complex and real coefficients. $s$ is the number of stages,
  $h^*$ is the elbow of
  the method and $h_t/s$ corresponds to the effective linear stability limit. SC refers to symmetric-conjugate compositions, whereas PR and PC stand
  for palindromic compositions with real and complex coefficients, respectively.}}
\label{table2}
\end{table}  
We also depict in Figure \ref{fig:elbow} the effective error $\mathcal{E}$ vs. $1/h$ for the basic scheme $\psis_h^{[2]}$ and several compositions with
complex coefficients of order 4 (dash-dotted lines), 6 (dashed) and 8 (solid lines) 
whose errors terms are collected in Table \ref{table2}. For comparison we also include the curve
corresponding to the triple-jump of order 4 with real coefficients (dotted line).

In view of Table \ref{table2} and Figure \ref{fig:elbow} some comments are in order. 
First, the size of the scaled error terms are much smaller for compositions with
complex coefficients than for schemes with real coefficients. Second, these error terms grow only moderately with the order
for a given method, in contrast with compositions involving real coefficients. In some cases (e.g., for symmetric-conjugate compositions of
order 8) they even decrease in size. Third, as a result, the elbow $h^*$ is typically much larger for schemes with complex coefficients,
attaining values for which the error is quite considerable. As a consequence, the asymptotic behavior of the error for this class of methods is
already visible for all practical values of the step size in a given integration. This can be clearly seen in Figure \ref{fig:elbow}, which 
qualitatively reproduces quite well the behavior observed for the Kepler and pendulum problems (Figures \ref{fig:cost2}, \ref{fig:pendulum}): we
notice that the curves corresponding to the 8th-order symmetric-conjugate compositions are placed below the one given by the basic scheme
$\psis_h^{[2]}$ for all relevant errors. 

\begin{figure}[t]
  \begin{center}
       \includegraphics[scale=0.65]{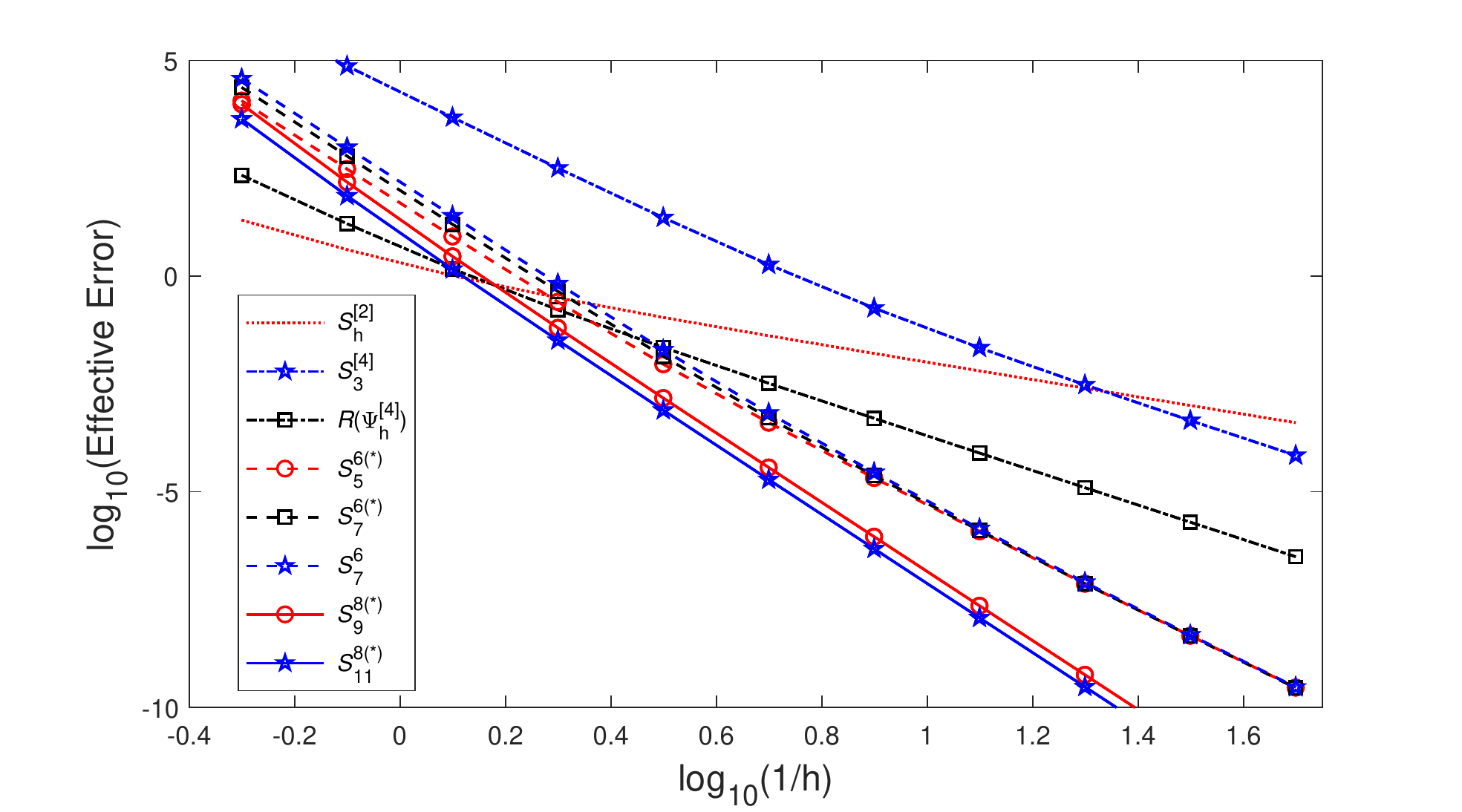}
      \caption{Nominal effective error $\mathcal{E}$ vs. $1/h$ for different compositions with complex coefficients of order 4, 6 
      and 8. Triple-jump of order 4 with real coefficients (dash-dotted line with stars) is included for comparison. The order of the methods is clearly visible.}
    \label{fig:elbow}
  \end{center}
\end{figure}

\section{Concluding remarks}

Although compositions of basic second-order time-symmetric integrators $\psis_h^{[2]}$ involving complex coefficients have been proposed in the past 
for overcoming the difficulties associated with the presence of negative real coefficients when the order $r \ge 3$, this is, we believe, the first systematic 
analysis of such composition methods. 

When the vector field defining the differential equation is real, the goal is of course to get accurate \emph{real} approximations to the exact solution,
whereas the direct application of a composition method with complex coefficients leads in general to a \emph{complex} approximation at each step.
Two approaches present themselves in a natural way: either one projects the solution on the real axis at the end of each integration step or
 the numerical solution is only projected at the end of the integration interval (or more generally only when output is required). In either case, however,
the favorable preservation properties the composition inherits from the basic scheme (such as time-symmetry, symplecticity, volume preservation, etc.)
are generally lost and the question is characterizing this loss in a precise way. 

We have seen that, in general, projecting at each time step preserves these qualitative properties up to an order much higher than the order of
accuracy of the composition itself, and provides a good description of the system. In addition to the usual palindromic sequence of 
coefficients in a composition, we have also explored symmetric-conjugate sequences, showing that it is indeed possible to construct numerical
integrators of high order requiring a smaller number of basic schemes. Thus, in particular, we have present a 6th-order method requiring 5
$\psis_h^{[2]}$ evaluations, and an 8th-order scheme involving only 9 $\psis_h^{[2]}$ evaluations. These numbers have to be compared with 7 and 15,
respectively, for palindromic compositions. The numerical tests carried out clearly illustrate how this reduction in the computational complexity 
translates into a better performance whereas still sharing with the exact solution its main qualitative properties up to a higher order. Moreover,
the efficiency diagrams show that higher order methods involving complex coefficients are more efficient than lower order schemes, not only for
small values of the step size $h$ as occurs typically with real coefficients, but in the whole region of $h$ where errors are reasonably small.
This remarkable property has been traced back to the structure and size of the successive terms in the asymptotic expansion of the error of these
compositions.

Since high order methods obtained from compositions with complex coefficients provide good accuracy and behave in practice as 
geometric numerical integrators, one might consider comparing them with composition methods with real coefficients on practical applications.
Take, for instance, the 8th-order method $\psis_{9}^{8(*)}$, involving 9 basic schemes $\psis_h^{[2]}$. The minimum number for a composition
method of the same order with real coefficients is 15, and more are required to have efficient schemes. It might be the case that for certain problems
this reduction in the number of evaluations compensates the extra cost due to using complex arithmetic, although this of course is highly
dependent of the particular structure of the processor and the implementation. In any case, this will be the subject of future research.

When dealing with this class of schemes, one might contemplate the possibility of projecting at the end of the whole integration interval 
or alternatively after
$N$ time steps, with $t = N h$, instead of projecting after each step. In that case, however, the approximate numerical solution explores along the evolution
regions
in the complex plane not necessarily in the proximity of the real axis, so that a rigorous analysis is more involved. Preliminary
results show that even in such a situation one might still have preservation of structures depending on the particular system,
the step size and the initial conditions one is considering. This issue deserves further analysis and will be explored in a forthcoming paper.

\subsection*{Acknowledgements}
FC and SB would like to thank the Isaac Newton Institute for Mathematical Sciences for support and hospitality during the programme ``Geometry,
compatibility and structure preservation in computational differential equations'', when work
on this paper was undertaken. This work was supported by EPSRC Grant Number EP/R014604/1 and by 
Ministerio de Ciencia e Innovaci\'on (Spain) through project PID2019-104927GB-C21 (AEI/FEDER, UE). 
A.E.-T. has been additionally supported by the predoctoral contract BES-2017-079697 (Spain).

\bibliographystyle{siam}

\end{document}